\numberwithin{equation}{section}
\newtheorem{theo}{Theorem}
\newtheorem{conj}{Conjecture}
\newtheorem{coro}{Corollary}
\newtheorem{prop}{Proposition}
\newtheorem{lem}{Lemma}
\newtheorem{thmx}{Theorem}
\theoremstyle{remark}
\newtheorem*{Remark*}{Remark}
\newtheorem*{Remarks*}{Remarks}
\newcommand*{\house}[1]{%
 \mathord{%
 \mathpalette\@house{#1}%
 }%
}
\newcommand*{\@house}[2]{%
 \dimen@=\fontdimen8 %
 \ifx#1\scriptscriptstyle\scriptscriptfont
 \else\ifx#1\scriptstyle\scriptfont
 \else\textfont\fi\fi
 3 %
 \sbox0{%
 $#1%
 \vrule width\dimen@\relax
 \overline{%
 \kern2\dimen@
 \begingroup 
 #2%
 \endgroup
 \kern2\dimen@
 }%
 \vrule width\dimen@\relax
 \mathsurround=1.5\dimen@ 
 $%
 }%
 \ht0=\dimexpr\ht0-\dimen@\relax
 \dp0=\dimexpr\dp0+2\dimen@\relax
 \vbox{%
 \kern\dimen@ 
 \copy0 %
 }%
}
\newcommand{\tra}{{}^t}
\newcommand{\Z}{\mathbb{Z}}
\newcommand{\Q}{\mathbb{Q}}
\newcommand{\R}{\mathbb{R}}
\renewcommand{\C}{\mathbb{C}}
\newcommand{\K}{\mathbb{K}}
\renewcommand{\L}{\mathbb{L}}
\newcommand{\Qbar}{\overline{\mathbb Q}}
\newcommand{\eps}{\varepsilon}
\newcommand{\Id}{{\rm Id}}
\newcommand{\Gal}{{\rm Gal}}
\newcommand{\calI}{\mathcal I}
\newcommand{\calE}{\mathcal E}
\newcommand{\calN}{\mathcal N}
\newcommand{\Card}{{\rm Card}}
\newcommand{\nsoul}{\underline{n}}
\newcommand{\isoul}{{\underline{i}}}
\newcommand{\jsoul}{{\underline{j}}}
\newcommand{\GG}{{\bf G}}
\newcommand{\EE}{{\bf E}}
\newcommand{\OK}{{\mathcal O}_{\K}}
\newcommand{\OL}{{\mathcal O}_{\L}}
\newcommand{\OQdexi}{{\mathcal O}_{\Q(\xi)}}
\newcommand{\EK}{\EE_{\K}}
\newcommand{\EQ}{\EE_{\Q}}
\newcommand{\EQbar}{\EE_{\Qbar}}
\begin{document}

\textheight=21cm 
\textwidth=13cm

 \selectlanguage{english}

\title{Values of $E$-functions are not Liouville numbers}
\date\today
\author{S. Fischler and T. Rivoal}
\maketitle

\begin{abstract} Shidlovskii has given a linear independence measure of values of $E$-functions with rational Taylor coefficients at a rational point, not a singularity of the underlying differential system satisfied by these $E$-functions. Recently, Beukers has proved a qualitative linear independence theorem for the values at an algebraic point of $E$-functions with arbitrary algebraic Taylor coefficients. In this paper, we obtain an analogue of Shidlovskii's measure for values of arbitrary $E$-functions at algebraic points. This enables us to solve a long standing problem by proving that the value of an $E$-function at an algebraic point is never a Liouville number. We also prove that values at rational points of $E$-functions with rational Taylor coefficients are linearly independent over $\overline{\mathbb{Q}}$ if and only if they are linearly independent over $\mathbb{Q}$. Our methods rest upon improvements of results obtained by Andr\'e and Beukers in the theory of $E$-operators. 
\end{abstract}

\section{Introduction}
Siegel \cite{siegel} defined in 1929 the class of $E$-functions in order to generalize the Diophantine properties of the exponential function (namely the Lindemann-Weierstrass Theorem) to other special functions such as Bessel's function $J_0(z):=\sum_{n=0}^\infty (-1)^n(z/2)^{2n}/n!^2$ or hypergeometric series ${}_pF_p$ with rational parameters. 
A power series $\sum_{n=0}^\infty \frac{a_n}{n!} z^n \in \Qbar[[z]]$ is said to be an $E$-function when it is solution of a linear differential equation over $\Qbar(z)$ (i.e., holonomic), 
and $\vert \sigma(a_n)\vert$ (for any $\sigma\in \textup{Gal}(\Qbar/\Q)$) and the least common denominator of $a_0, a_1, \ldots, a_n$ all grow at most exponentially in $n$. Note that Siegel's original definition of $E$-functions is more general: see the end of this introduction.
Throughout this paper we fix an embedding of $\Qbar $ in $\C$.

A lot of important qualitative results are known on the arithmetic nature of the values taken by $E$-functions at algebraic points, amongst which we cite the celebrated Siegel-Shidlovskii Theorem (see \cite{Shidlovskiilivre} for instance). This result is not always strong enough, in particular to deduce  linear independence of values of $E$-functions.  It has been improved in \cite{ns} and \cite{beukers} (see Theorem~\ref{thbeukers} below for the linear case), but some assumptions still have to be checked (for instance the functions should not be
evaluated at a singular point).

 Quantitative versions of the  Siegel-Shidlovskii Theorem have been proven, such as the measure of algebraic independence of Lang-Galochkin (see \cite[p.~238, Theorem~5.29 and remarks]{EMS} and \cite[Chapter~11]{Shidlovskiilivre}), but they hold only under certain assumptions on algebraic independence or rationality of the coefficients of the $E$-functions.
In the  linear setting, the main result is the following one, due to Shidlovskii \cite[p.~358, Theorem~1, Eq.~(32)]{Shidlovskiilivre}.
\begin{thmx}[Shidlovskii] \label{thshid}
Let $\underline f = \tra (f_1,\ldots, f_N) \in \mathbb{Q}[[z]]^N$ be a vector of $E$-functions which is solution of a differential system $\underline f '=A\underline f $ for some $A\in M_N(\Q(z))$. Assume that $f_1,\ldots, f_N$ are line\-ar\-ly independent over $\Q(z)$ and that $z_0\in \mathbb Q^*$ is not a pole of an entry of $A$. Then for any $\varepsilon>0$, there exists $c=c(\varepsilon,z_0,f_1,\ldots,f_N)>0$ such that for all $\lambda_1, \ldots, \lambda_N \in \mathbb Z$ not all zero, we have 
$$
\bigg\vert \sum_{j=1}^N \lambda_j f_j(z_0)\bigg\vert > c H^{-N+1-\varepsilon} \quad\mbox{ where } H:=\max_{1\leq j \leq N}\vert\lambda_j\vert.
$$
\end{thmx}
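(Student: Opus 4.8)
The plan is to run Siegel's method of Padé-type simultaneous approximation, made quantitative by Shidlovskii's nonvanishing lemma. Write $f_j(z)=\sum_{m\ge 0}a_{j,m}z^m/m!$, where the $|a_{j,m}|$ and the denominators of $a_{j,0},\dots,a_{j,m}$ grow at most geometrically. Fix a large integer parameter $n$, to be chosen as a function of $H$ only at the very end. First I would invoke Siegel's lemma to produce a nonzero vector of polynomials $P_1,\dots,P_N\in\Z[z]$ of degree $\le D\approx n$ and height $e^{O(n\log n)}$ such that the remainder $R_0(z):=\sum_{j=1}^N P_j(z)f_j(z)$ vanishes at $z=0$ to an order $M$ pushed as close to $N(D+1)$ as the box principle allows: with $N(D+1)$ unknown coefficients and $M$ homogeneous conditions, a controlled-height solution exists as soon as $N(D+1)-M$ is a fixed proportion of $nN$. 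The purpose of driving $M$ up is that the factor $1/m!$ in the Taylor coefficients makes the tail of $R_0$ tiny, so that $|R_0(z_0)|$ is super-exponentially small in $n$ while the heights of the $P_j$ remain only $e^{O(n\log n)}$.

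Next I would manufacture $N$ forms from this single one. Differentiating $R_0$ and repeatedly using $\underline f'=A\underline f$ rewrites each derivative as $R_i(z)=\sum_{j}P_{i,j}(z)f_j(z)$, the $P_{i,j}$ being obtained from the $P_j$ by a recursion involving $A$; clearing the common denominator of $A$ a bounded number of times keeps the degrees $O(n)$ and the heights $e^{O(n\log n)}$, and since $R_i$ still vanishes to order $\ge M-i$ at $0$, the values $R_i(z_0)$ remain super-exponentially small for $0\le i\le N-1$. The decisive input is then Shidlovskii's lemma: because $\underline f'=A\underline f$ holds and $f_1,\dots,f_N$ are $\Q(z)$-linearly independent, among the forms $R_0,R_1,\dots$ one may select $N$ of them, indexed by a set $\mathcal I$, whose coefficient matrix $\big(P_{i,j}\big)_{i\in\mathcal I,\,1\le j\le N}$ has determinant $\not\equiv 0$ with a controlled order of vanishing. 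The hypothesis that $z_0$ is not a pole of $A$ is exactly what guarantees that, after enlarging the range of indices by a bounded amount if necessary, this determinant is nonzero at the point $z_0$. This rank statement is the heart of the matter and the step I expect to be the main obstacle, as it is the only place where the differential structure and the $\Q(z)$-independence are genuinely used; everything surrounding it is approximation bookkeeping.

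Finally I would run the determinant argument. Choose $j_0$ with $f_{j_0}(z_0)\ne 0$, which exists because $\Q(z)$-independence together with $z_0$ being nonsingular prevents all values from vanishing. Given $\lambda\ne 0$, form the $N\times N$ matrix $\mathcal M$ whose first row is $(\lambda_1,\dots,\lambda_N)$ and whose other $N-1$ rows are $N-1$ of the selected vectors $\big(P_{i,j}(z_0)\big)_j$, picked so that $\det\mathcal M\ne 0$; this is possible for every nonzero $\lambda$ since the selected vectors span $\Q^N$. Writing $L:=\sum_j\lambda_j f_j(z_0)$ and applying Cramer's rule to $\mathcal M\,\underline f(z_0)=\tra(L,\rho_2,\dots,\rho_N)$, where each $\rho_r$ is one of the super-exponentially small remainders, gives
$$(\operatorname{adj}\mathcal M)_{j_0,1}\,L=\det\mathcal M\cdot f_{j_0}(z_0)-\sum_{r=2}^{N}(\operatorname{adj}\mathcal M)_{j_0,r}\,\rho_r.$$
Here $\det\mathcal M$ is a nonzero rational whose denominator is at most $e^{O(n)}$ (only the denominator of $z_0$ and the bounded powers of $A$'s denominator enter), whence $|\det\mathcal M\cdot f_{j_0}(z_0)|\ge c_0\,e^{-O(n)}$; the cofactors $(\operatorname{adj}\mathcal M)_{j_0,r}$ with $r\ge 2$ contain one $\lambda$-row and are thus $\le H\,e^{O(n\log n)}$, so the error sum is at most $H\,e^{O(n\log n)}\max_r|\rho_r|$, which is negligible. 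Since $(\operatorname{adj}\mathcal M)_{j_0,1}$ contains no $\lambda$ and is $\le e^{O(n\log n)}$, one obtains
$$|L|\ \ge\ \tfrac12\,c_0\,|\det\mathcal M|\big/\big|(\operatorname{adj}\mathcal M)_{j_0,1}\big|.$$
Optimising the degree $D$, the order $M$ and then $n$ as functions of $H$, so as to balance the lower bound $|\det\mathcal M|\ge e^{-O(n)}$ against the super-exponentially small error term, yields $|L|\ge c\,H^{-\mu-o(1)}$; the content of Shidlovskii's sharp choice of parameters is that $\mu$ can be brought down to exactly $N-1$, the loss of $\varepsilon$ reflecting that the extremal balance between heights and remainders cannot be attained exactly.
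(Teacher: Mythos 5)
This statement is Theorem~\ref{thshid}, which the paper imports from Shidlovskii's book \cite[p.~358, Theorem~1]{Shidlovskiilivre} without giving any proof, so there is no in-paper argument to compare against; your sketch is the classical Siegel--Shidlovskii method (auxiliary form via Siegel's lemma, derived forms $R_i$ from the differential system, the rank/non-vanishing lemma at the non-singular point $z_0$, then the Cramer-type determinant bound), which is exactly how the cited source proves it. You correctly isolate the genuinely hard step --- Shidlovskii's fundamental lemma guaranteeing $N$ forms whose coefficient determinant is non-zero at $z_0$ after adding a bounded number of extra derivatives --- and the surrounding bookkeeping and parameter optimisation are stated at the right level of detail for a sketch, so this is an accurate outline of the standard proof rather than a new route.
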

This theorem holds {\em verbatim} with $\mathbb Q$ replaced by an imaginary quadratic number field and $\mathbb Z$ replaced by its ring of integers. However no such result is known for other number fields $\K$. The point is that all known quantitative results are based on the Siegel-Shidlovskii method only, which provides linear independence of the full set of the values of $E$-functions in Theorem~\ref{thshid} only when $\K$ is either $\Q$ or imaginary quadratic. Even the qualitative part of Theorem~\ref{thshid} (namely, $\sum_{j=1}^N \lambda_j f_j(z_0)\neq 0 $) has been proved only recently by Beukers \cite[Corollary 1.4]{beukers} for arbitrary number fields, using Andr\'e's theory of $E$-operators~\cite{andre}.
\begin{thmx}[Beukers] \label{thbeukers}
Let $\underline f = \tra (f_1,\ldots, f_N) \in \Qbar[[z]]^N$ be a vector of $E$-func\-tions which is solution of a differential system $\underline f '=A\underline f $ for some $A\in M_N(\Qbar(z))$. Assume $f_1,\ldots, f_N$ are line\-ar\-ly independent over $\Qbar(z)$ and that $z_0\in \Qbar^*$ is not a pole of an entry of $A$. Then the num\-bers $f_1(z_0),\ldots, f_N(z_0)$ are line\-ar\-ly independent over $\Qbar$.
\end{thmx}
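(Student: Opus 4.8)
The plan is to reduce the statement to a refined form of the Siegel--Shidlovskii theorem and then let the hypothesis of $\Qbar(z)$-linear independence finish the job. Precisely, I would aim to establish the following lifting property: \emph{if $\sum_{j=1}^N \lambda_j f_j(z_0)=0$ with $\lambda_j\in\Qbar$ not all zero, then there exist $P_1,\ldots,P_N\in\Qbar[z]$, not all zero, with $P_j(z_0)=\lambda_j$ for every $j$ and $\sum_{j=1}^N P_j(z)f_j(z)=0$ identically.} Granting this, the assumed linear independence of $f_1,\ldots,f_N$ over $\Qbar(z)$ forces $P_1=\cdots=P_N=0$, whence $\lambda_j=P_j(z_0)=0$ for all $j$, contradicting the choice of the $\lambda_j$. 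Thus no nontrivial relation among the values can exist, which is exactly the conclusion.

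To produce such a polynomial relation I would rely on two structural inputs from André's theory of $E$-operators. The first is André's theorem that the minimal differential operator over $\Qbar(z)$ annihilating an $E$-function is an $E$-operator, whose only singularities are $0$ and $\infty$, with $0$ regular singular and rational exponents; since each $f_j$ is so annihilated, the differential module generated by $f_1,\ldots,f_N$ has singular locus contained in $\{0,\infty\}$, even though $A$ itself may have other poles (these are apparent for $\underline f$ because $E$-functions are entire, their coefficients $a_n/n!$ decaying superexponentially). The second input is the passage to the dual connection $\lambda'=-\tra A\,\lambda$, whose horizontal sections are exactly the linear forms $\lambda(z)=(\lambda_1(z),\ldots,\lambda_N(z))$ annihilating $\underline f$: a numerical relation $\sum_j\lambda_j f_j(z_0)=0$ is then the value at $z_0$ of a local horizontal section of the dual, and the task becomes to show that this local section is the restriction of a \emph{global} section with polynomial coordinates.

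The heart of the argument is this globalization. I would use the Siegel--Shidlovskii theorem together with the rigidity of the module to match the dimension of the space of numerical relations at $z_0$ with that of global relations among the $f_j$, thereby pinning down the space in which the sought section must live. Then, using André's description of the local solutions, the regular singular structure at $0$ combined with the apparentness of the other finite singularities forces the dual horizontal section to have rational-function coordinates, and the Gevrey-order-$1$ (irregular) structure of $E$-operators at $\infty$ bounds their growth, so that after clearing denominators one obtains the desired polynomials $P_j$ with $P_j(z_0)=\lambda_j$. This is precisely the mechanism by which André's theory replaces the archimedean approximation arguments of the classical Siegel--Shidlovskii method.

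I expect the globalization step to be the main obstacle: a priori the numerical relation is only a local analytic datum attached to $z_0$, with no reason to arise from a relation defined over $\Qbar(z)$, and it is exactly the rigidity supplied by André's structure theorem for $E$-operators --- singular locus reduced to $\{0,\infty\}$, rational exponents at $0$, controlled irregularity at $\infty$ --- that makes the descent possible. This is also what explains why the conclusion holds over an arbitrary number field, whereas the earlier, purely analytic approach of Shidlovskii could deliver linear independence of the full family of values only over $\Q$ and imaginary quadratic fields.
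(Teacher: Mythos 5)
The paper does not actually prove Theorem~\ref{thbeukers}: it is quoted as background, with a citation to Corollary~1.4 of Beukers' article, so there is no internal proof to compare yours with. Your opening reduction is correct and is exactly Beukers' own: the linear independence of the values follows at once from the lifting property that every relation $\sum_j\lambda_jf_j(z_0)=0$ extends to an identity $\sum_jP_j(z)f_j(z)\equiv0$ with $P_j(z_0)=\lambda_j$ (this lifting property is Theorem~1.3 of Beukers' paper). The difficulty is that your proposed proof of the lifting property does not go through.

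The gap is in the globalization step. The claim that the regular-singular structure at $0$, the apparentness of the other finite singularities, and the Gevrey structure at $\infty$ ``force the dual horizontal section to have rational-function coordinates'' is false as a general mechanism: it nowhere uses the hypothesis $\lambda(z_0)\cdot\underline f(z_0)=0$, so if valid it would apply to \emph{every} local horizontal section of the dual connection and would force that connection to be trivial; already for $\underline f=\tra(1,e^z)$ the dual system $\lambda'=-\lambda A$ has the transcendental horizontal section $(c_1,c_2e^{-z})$. (Note also that a pole of $A$ is not an apparent singularity of the \emph{system} merely because the particular solution $\underline f$ is entire --- other solutions may be genuinely singular there --- which is why Beukers needs his desingularization theorem, including the invertibility of $M(z_0)$, rather than a remark about entirety.) The arithmetic input you invoke to repair this, namely using the Siegel--Shidlovskii theorem ``to match the dimension of the space of numerical relations at $z_0$ with that of global relations,'' is circular: under your hypotheses the space of global relations is zero, so the dimension matching \emph{is} the statement to be proved, and the classical Siegel--Shidlovskii theory delivers linear independence of values only over $\Q$ and imaginary quadratic fields --- as the introduction of the present paper stresses, this is precisely why Theorem~\ref{thbeukers} was open for general number fields before Beukers. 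What is missing is the actual engine of Beukers' argument: the division property that an $E$-function vanishing at an algebraic $\alpha\neq0$ is $(z-\alpha)$ times an $E$-function (Proposition~4.1 of Beukers, the result underlying Proposition~\ref{prop1} of this paper), combined with the desingularization theorem and a counting argument against the classical zero estimates. Your sketch cites Andr\'e's purity theorem but never isolates this division property, and without it there is no source for the descent from a local analytic relation to a polynomial one.
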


The purpose of this paper is to prove new Diophantine results using this approach of Andr\'e and Beukers. 
Our first main result is the following theorem, where we remove the rationality assumption on the coefficients and also the non-singularity assumption on $z_0$. We recall that for a non-zero algebraic number $\alpha$, its house $\house{\alpha}$ is the maximum of the moduli of $\alpha$ and of all its 
 Galois conjugates over $\mathbb Q$. We also denote by $\OK$ the ring of integers of a number field $\K$. 

\begin{theo} \label{th1}
Let $\K$ be a number field of degree $d$ over $\Q$, 
 $z_0\in\K$, and $\underline f = \tra (f_1,\ldots, f_N)$ be a vector of $E$-functions with coefficients in $\K$ such that $\underline f '=A\underline f $ for some $A\in M_N(\K(z))$. Then for any $\eps>0$, there exists $c=c(\varepsilon,\K,z_0,f_1,\ldots,f_N)>0$ with the following property.

For any $\lambda_1,\ldots, \lambda_N\in\OK$, if $\Lambda := \lambda_1 f_1(z_0) + \ldots + \lambda_N f_N(z_0) $ is non-zero, then 
$$| \Lambda| > c H^{-d N^d+1-\eps} \quad \mbox{ where } H:=\max_{1\leq j \leq N}\house{\lambda_j}.$$
\end{theo}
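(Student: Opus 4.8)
The plan is to deduce the measure from Shidlovskii's Theorem~\ref{thshid} by a ``norm form'' construction that simultaneously removes the two features absent from that result: here the coefficients $\lambda_j$ lie in $\OK$ rather than in $\Z$, and both the functions $f_j$ and the point $z_0$ are defined over $\K$ rather than over $\Q$. Write $\sigma_1=\mathrm{id},\sigma_2,\ldots,\sigma_d$ for the embeddings of $\K$ into $\C$, let $\L$ be the Galois closure of $\K/\Q$, and for each $s$ set
\[
\Lambda_s:=\sum_{j=1}^N\sigma_s(\lambda_j)\,\sigma_s(f_j)\big(\sigma_s(z_0)\big),
\]
where $\sigma_s(f_j)$ is the $E$-function obtained by applying $\sigma_s$ to the Taylor coefficients of $f_j$; thus $\Lambda_1=\Lambda$. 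I would then study the product $\Xi:=\prod_{s=1}^d\Lambda_s$ and recover $|\Lambda|=|\Xi|\big/\prod_{s=2}^d|\Lambda_s|$ by combining a lower bound for $|\Xi|$ with trivial upper bounds for the conjugate forms $\Lambda_s$, $s\ge 2$.

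The upper bounds are immediate: since $|\sigma_s(\lambda_j)|\le\house{\lambda_j}\le H$ and each $\sigma_s(f_j)(\sigma_s(z_0))$ is a fixed constant, one gets $|\Lambda_s|\le c_1 H$, whence $\prod_{s=2}^d|\Lambda_s|\le c_1^{\,d-1}H^{d-1}$. For the lower bound on $|\Xi|$ I would realise $\Xi$ as a value at a \emph{rational} point of a system of $E$-functions over $\Q$. Dilating each conjugate system $\sigma_s(\underline f)'=\sigma_s(A)\,\sigma_s(\underline f)$ by $\sigma_s(z_0)$ moves the evaluation to the point $1$; the tensor product over $s=1,\ldots,d$ then produces the $N^d$ functions $F_{\underline\jmath}(z)=\prod_{s}\sigma_s(f_{j_s})(\sigma_s(z_0)z)$, with $F_{\underline\jmath}(1)=\prod_s\sigma_s(f_{j_s})(\sigma_s(z_0))$, satisfying a differential system whose matrix is, up to the $\Q$-rational gauge given by permutation of the tensor factors, invariant under $\Gal(\L/\Q)$. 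Passing to the $\Q$-span of the $F_{\underline\jmath}$ yields $M\le N^d$ $E$-functions with \emph{rational} Taylor coefficients solving a system defined over $\Q(z)$. Expanding $\Xi=\sum_{\underline\jmath}\big(\prod_s\sigma_s(\lambda_{j_s})\big)F_{\underline\jmath}(1)$, the compatibility between the Galois action permuting the multi-indices $\underline\jmath$ and the one permuting the descended basis forces the resulting coefficients to be rational; after clearing a bounded denominator they become integers of absolute value $\ll H^d$ (each a product of $d$ conjugates of house $\le H$). Theorem~\ref{thshid} applied at $1$, together with $M\le N^d$, then gives $|\Xi|>c_2 H^{-d(N^d-1)-\eps'}$.

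Granting this, the theorem follows: $|\Lambda|=|\Xi|/\prod_{s\ge2}|\Lambda_s|>c_2c_1^{-(d-1)}H^{-d(N^d-1)-\eps'}H^{-(d-1)}=c\,H^{-dN^d+1-\eps'}$, and one takes $\eps'=\eps$. For the lower bound to be legitimate one also needs $\Xi\ne0$, i.e.\ $\Lambda_s\ne0$ for every $s$: this is the qualitative linear independence over $\Qbar$ of the numbers $\sigma_s(f_1)(\sigma_s(z_0)),\ldots,\sigma_s(f_N)(\sigma_s(z_0))$, which I would obtain from Beukers' Theorem~\ref{thbeukers} applied to the conjugate systems (after the reduction described next), using that the $\sigma_s(\lambda_j)$ are not all zero.

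The hard part is that Theorem~\ref{thshid} requires the evaluation point not to be a pole of the system matrix, whereas the whole novelty here is to allow $z_0$ to be a \emph{singularity} of $A$; after dilation this makes $1$ a singularity of the descended $\Q(z)$-system, so Shidlovskii's theorem does not apply verbatim. Removing this obstruction is where the André--Beukers theory of $E$-operators enters: using that the minimal operator annihilating an $E$-function is singular only at $0$ and $\infty$, so that $\sigma_s(z_0)\ne0$ is an ordinary point for the operators attached to the $\sigma_s(f_j)$, I would reformulate each conjugate system as an equivalent one of the \emph{same} dimension $N$ that is non-singular at $\sigma_s(z_0)$, expressing the values $\sigma_s(f_j)(\sigma_s(z_0))$ as $\Qbar$-linear combinations of values at $\sigma_s(z_0)$ of the new $E$-functions. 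Carrying this out while preserving the dimension $N$ — so that the tensor construction still involves only $N^d$ functions and the exponent $dN^d$ is not degraded — and simultaneously controlling the sizes and denominators of the auxiliary $E$-functions, is the main technical obstacle, and the point where improvements of the results of André and Beukers are required.
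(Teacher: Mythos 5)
Your overall strategy is the same as the paper's: form the norm-type product $\Xi=\prod_s\Lambda_s$ over the conjugate linear forms, descend the tensor system to $\Q$ by Galois invariance, apply Shidlovskii's Theorem~\ref{thshid} at a non-singular rational point after a Beukers-type desingularization, and recover $|\Lambda|$ by bounding the conjugate factors trivially; the exponent bookkeeping is correct. But there is a genuine gap at the step $\Xi\neq 0$. You justify $\Lambda_s\neq 0$ by ``linear independence over $\Qbar$ of the numbers $\sigma_s(f_j)(\sigma_s(z_0))$'' via Theorem~\ref{thbeukers} plus the non-vanishing of the coefficient vector. Theorem~\ref{th1} assumes neither that the $f_j$ are linearly independent over $\Qbar(z)$ nor that $z_0$ is a non-singular point, so Theorem~\ref{thbeukers} is not applicable as stated, and the values $\sigma_s(f_j)(\sigma_s(z_0))$ may well satisfy $\Qbar$-linear relations. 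The only hypothesis available is $\Lambda_1\neq 0$, and the real content of this step is the implication $\Lambda_1\neq0\Rightarrow\Lambda_s\neq0$ for all $s$, i.e.\ that if the $E$-function $g=\sum_j\lambda_jf_j$ (with coefficients in $\K$) does \emph{not} vanish at $z_0$, then no conjugate $g^{\sigma}$ vanishes at $\sigma(z_0)$. This is exactly Proposition~\ref{prop1} of the paper, which rests on Beukers' factorization result ($f(z_0)=0$ implies $f(z)=(z-z_0)h(z)$ with $h$ an $E$-function), not on the linear independence statement of Theorem~\ref{thbeukers}. One can try to rescue your route by first reducing to a $\Qbar(z)$-basis and a desingularized system and checking that the reduced coefficient vector of $\Lambda_s$ is the $\sigma_s$-conjugate of that of $\Lambda_1$, but you have not done this, and it is precisely the missing argument.

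The second weak point is the desingularization itself, which you correctly identify as the technical crux but do not resolve. The statement you appeal to (the minimal operator of an $E$-function is singular only at $0$ and $\infty$) concerns scalar operators; what is actually needed is Beukers' Theorem~1.5 for first-order systems (a gauge transformation $\underline f=M\underline e$ with $\underline e\,'=B\underline e$, $B$ having no finite non-zero singularity, and the same dimension), and moreover in a form defined over the number field $\K$ and compatible with the Galois action, so that after tensoring and descending the coefficients of $\Xi$ in the rational basis are genuinely rational with bounded denominators. This refinement over $\K$ is the paper's Proposition~\ref{prop2}, whose proof again uses Proposition~\ref{prop1}. Note also that Beukers' desingularization requires linear independence of the $f_j$ over $\C(z)$, which is not assumed in Theorem~\ref{th1}; the paper first passes to a maximal linearly independent subfamily (of values, then of functions) with controlled coefficients before applying Proposition~\ref{prop2} and Theorem~\ref{thshid}. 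In short: the architecture of your proof matches the paper's, but both places where it is incomplete are exactly the places where the paper's new ingredient, Proposition~\ref{prop1} on conjugates of vanishing $E$-functions, is indispensable.
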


\begin{Remarks*}
-- Given arbitrary $E$-functions $f_1,\ldots,f_N$ and any $z_0\in\Qbar$, this theorem applies because there exists a number field $\K$ containing $z_0$ and all coefficients of $f_1,\ldots,f_N$, and the family $(f_1,\ldots, f_N)$ can always be enlarged to satisfy a first-order differential system. This shows, without any assumption on $f_1,\ldots,f_N$ and $z_0$, the existence of $\kappa,c>0$ such that $| \Lambda| > c H^{-\kappa}$ provided $ \Lambda\neq 0$.

-- To our knowledge, Theorem~\ref{th1} provides the first quantitative version of Beukers' Theorem~\ref{thbeukers}, when it is further assumed in Theorem~\ref{th1} that $f_1, \ldots, f_N$ are linearly independent over $\mathbb K(z)$ and that $z_0\in \mathbb K^*$ is not a pole of $A$, ensuring that $\Lambda\neq 0$.

-- The  constants $c$ in Theorems~\ref{thshid} and~\ref{th1}, and their equivalents in other results of the paper, are in principle effective by a remark of Andr\'e \cite[footnote on p.~129]{andrelivre} and by the multiplicity  estimate of Bertrand-Beukers~\cite{bb} made completely explicit by 
Bertrand-Chirskii-Yebbou~\cite{bcy}. However, we did not try to compute them because they would certainly  be far from best possible and because we do not see any immediate application.
\end{Remarks*}

An important consequence of Theorem~\ref{th1} is the following result, which completely settles the problem of deciding whether (real) values of $E$-func\-tions can be Liouville numbers or not. We recall that a Liouville number is a real number $\xi$ such that there exist two sequences of rational integers $p_n, q_n$ such that $q_n\ge 2$ and $0<\vert q_n\xi-p_n\vert<1/q_n^{ n}$ for all sufficiently large integers $n$ ({\em a fortiori} $p_nq_n\neq 0$). 

\begin{coro} \label{corliouville}
Let $f$ be an $E$-function, and $z_0$ be an algebraic number. Then $f(z_0)$ is not a Liouville number. 
\end{coro}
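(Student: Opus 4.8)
The plan is to deduce Corollary~\ref{corliouville} from Theorem~\ref{th1} applied to the pair consisting of $f$ and the constant $E$-function $1$, turning the defining inequalities of a Liouville number into two contradictory estimates. First I would dispose of the trivial cases. Since $E$-functions are entire, $f(z_0)$ is a well-defined complex number. If $f(z_0)\notin\R$ it is not a Liouville number by definition; and if $f(z_0)$ is algebraic it is not a Liouville number either, because Liouville numbers are transcendental (the classical Liouville inequality for algebraic numbers). So I may assume $\xi:=f(z_0)$ is a real transcendental number, and argue by contradiction: suppose $\xi$ is Liouville, so there are rational integers $p_n,q_n$ with $q_n\ge 2$ and $0<|q_n\xi-p_n|<q_n^{-n}$ for all large $n$.

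Next I would set up the differential system. The $E$-function $f$ satisfies a linear differential equation of some order $m$ over $\Qbar(z)$, so the vector $\tra(f,f',\ldots,f^{(m-1)})$ of $E$-functions is solution of a first-order system with companion matrix. Adjoining the constant function $1$ (an $E$-function with $1'=0$) yields a vector $\underline f=\tra(f,f',\ldots,f^{(m-1)},1)$ of $N:=m+1$ $E$-functions satisfying $\underline f'=A\underline f$ with $A\in M_N(\K(z))$, where $\K$ is any number field containing $z_0$, the coefficients of $f$, and those of the system; set $d:=[\K:\Q]$. I stress that Theorem~\ref{th1} requires neither linear independence of the components nor $z_0$ to be a regular point, so there are no extra hypotheses to verify.

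Then, for each large $n$, I apply Theorem~\ref{th1} to $\Lambda_n:=q_nf(z_0)-p_n$, that is, to the tuple $(\lambda_1,\ldots,\lambda_N)=(q_n,0,\ldots,0,-p_n)\in\OK^N$. Since $\xi$ is transcendental and $q_n\neq 0$, we have $\Lambda_n\neq 0$, so Theorem~\ref{th1} gives $|\Lambda_n|>cH_n^{-dN^d+1-\eps}$ with $H_n=\max(\house{q_n},\house{-p_n})=\max(q_n,|p_n|)$, because the house of a rational integer equals its absolute value. The Liouville inequality forces $|p_n|<q_n|\xi|+1\le C'q_n$ for a constant $C'$ depending only on $\xi$, whence $H_n\le C'q_n$; writing $\mu:=dN^d-1+\eps>0$, this yields $|\Lambda_n|>c''q_n^{-\mu}$ for some $c''>0$ independent of $n$. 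Comparing with $|\Lambda_n|<q_n^{-n}$ gives $q_n^{\mu-n}>c''$ for all large $n$; but $q_n\ge 2$ forces the left-hand side to be at most $2^{\mu-n}\to 0$, a contradiction. Hence $f(z_0)$ is not a Liouville number.

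The deduction is essentially routine once Theorem~\ref{th1} is in hand: the only delicate points are the reduction to the real transcendental case and the bookkeeping that keeps $c$, $d$, $N$, and hence $\mu$, fixed while $n\to\infty$, so that the \emph{polynomial} lower bound in $q_n$ coming from Theorem~\ref{th1} eventually beats the \emph{super-polynomial} Liouville upper bound $q_n^{-n}$. All the genuine difficulty of the statement lies in Theorem~\ref{th1} itself, not in this corollary.
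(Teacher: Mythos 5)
Your proof is correct and follows essentially the same route as the paper: adjoin the constant function $1$ to a first-order system containing $f$ over a number field $\K\ni z_0$, apply Theorem~\ref{th1} to $q_n f(z_0)-p_n$ to get a lower bound polynomial in $q_n$, and contrast it with the super-polynomial Liouville upper bound. The only cosmetic difference is that you split off the algebraic case via the classical Liouville inequality, whereas the paper only needs to distinguish $f(z_0)\in\Q$ from $f(z_0)\notin\Q$ (irrationality already guarantees $\Lambda\neq 0$); both reductions are valid.
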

 The proof of Corollary \ref{corliouville} runs as follows: in Theorem~\ref{th1}, let $z_0\in \Qbar$, take $f_1:=1$, $f_2:=f$ and consider a vector $\tra (f_1, f_2\ldots, f_N)$ of $E$-functions with coefficients in a number field $\K$ such that $\underline f '=A\underline f$ for some $A\in M_N(\K(z))$, where $\K$ is large enough to contain $z_0$. If $f(z_0)\in \mathbb Q$, then $f(z_0)$ is not a Liouville number. 
If $f(z_0)\notin \mathbb Q$, then $\lambda_1+\lambda_2 f(z_0)\neq 0$ for all $\lambda_1, \lambda_2\in\Z$ not both zero, so that Theorem~\ref{th1} yields 
$\vert \lambda_1+\lambda_2 f(z_0)\vert>c\max(\vert \lambda_1\vert, \vert \lambda_2\vert)^{-\kappa}$ for some $c,\kappa>0$. This rules out the possibility that $f(z_0)$ is a Liouville number.

Of course Corollary \ref{corliouville} is interesting only when $f(z_0)\in\R$. If we do not assume this, note however that the real and imaginary parts of $f(z_0)$ are values of $E$-functions (see the remark in \S\ref{ssec:conjugates} below), so that none of them is a Liouville number.

 Amongst other families of classical real numbers already proved not to be Liouville numbers, we find automatic numbers (Adamczewski-Cassaigne~\cite{ac}) and Mahler numbers, i.e. the values of Mahler series in $\mathbb Q[[z]]$   at the inverse of rational integers (Bell-Bugeaud-Coons~\cite{bbc}). It is interesting to observe that the theories of $E$-functions and of Mahler series share many common properties, see also \cite{AF}.

\medskip

Let us also mention another interesting corollary, which is a consequence of Theorem~\ref{th1} with $N=2$ and $N=3$ respectively (recall that $J_0(z)$ is solution of $zy''(z)+y'(z)+zy(z)=0$).
\begin{coro} \label{coro:mesexpj0}
For any algebraic number $\alpha\in \Qbar^*$ of degree $d$ over $\mathbb Q$ and any $\varepsilon>0$, there exists $c=c(\alpha,\varepsilon)$ such that, for all $(p,q)\in \Z\times \mathbb N$ with $q\neq 0$, 
$$
\Big \vert e^{\alpha}-\frac pq\Big\vert > \frac{c}{q^{d2^d+\varepsilon}},
\quad \textup{ 
respectively} \quad 
\Big \vert J_0(\alpha)-\frac pq\Big\vert >\frac{c}{q^{d3^d+\varepsilon}}.
$$
\end{coro}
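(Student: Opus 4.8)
The plan is to derive both estimates directly from Theorem~\ref{th1}, applied to two explicit first-order systems of $E$-functions, and then to convert the resulting lower bound on a linear form into a rational approximation measure. Set $\K:=\Q(\alpha)$, which has degree $d$ over $\Q$, and note $z_0:=\alpha\in\K^*$. For the exponential, since $e^z$ satisfies $y'=y$, the vector $\tra(1,e^z)$ consists of $E$-functions with rational (hence $\K$-)coefficients and satisfies $\underline f'=A\underline f$ with $A=\left(\begin{smallmatrix}0&0\\0&1\end{smallmatrix}\right)$; this is the case $N=2$. For $J_0$, which satisfies $zy''+y'+zy=0$, the vector $\tra(1,J_0,J_0')$ consists of $E$-functions (derivatives of $E$-functions are $E$-functions) and satisfies a first-order system whose matrix has the entry $-1/z$ with a pole at $0$; this is the case $N=3$, and it is precisely here that one uses that Theorem~\ref{th1} imposes no non-singularity hypothesis on $z_0$ (only $\alpha\neq0$ matters, which holds).

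Next I would specialize the integer coefficients. Given $(p,q)\in\Z\times\N$ with $q\neq0$, put $\lambda_1:=-p$, $\lambda_2:=q$ (and $\lambda_3:=0$ in the Bessel case), all lying in $\Z\subseteq\OK$. Then $\Lambda=q\,e^{\alpha}-p=q\big(e^{\alpha}-\tfrac pq\big)$ in the first case and $\Lambda=q\,J_0(\alpha)-p=q\big(J_0(\alpha)-\tfrac pq\big)$ in the second. Because the $\lambda_j$ are rational integers, their house equals their usual absolute value, so $H=\max(|p|,|q|)$. Theorem~\ref{th1} then gives, whenever $\Lambda\neq0$, the bound $|\Lambda|>cH^{-dN^d+1-\eps}$ with $N=2$ (resp. $N=3$), that is, $dN^d=d2^d$ (resp. $d3^d$).

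Two routine points remain. First, the hypothesis $\Lambda\neq0$ must be checked: it amounts to $e^{\alpha}\neq p/q$ (resp. $J_0(\alpha)\neq p/q$), which holds because $e^{\alpha}$ and $J_0(\alpha)$ are transcendental, hence irrational, for every algebraic $\alpha\neq0$ (Lindemann--Weierstrass for the exponential, and a classical consequence of the Siegel--Shidlovskii theorem for $J_0$). Second, I must pass from $H$ to $q$. If $|e^{\alpha}-p/q|\geq1$ (resp. $|J_0(\alpha)-p/q|\geq1$) the claimed inequality is trivial for $c\leq1$; otherwise $|p/q|\leq|e^{\alpha}|+1$, whence $H=\max(|p|,|q|)\leq C|q|$ for a constant $C=C(\alpha)$. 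Since the exponent is negative, substituting $H\leq C|q|$ into $|\Lambda|>cH^{-dN^d+1-\eps}$ and dividing by $|q|$ turns the exponent $-dN^d+1-\eps$ into $-dN^d-\eps$ on $|q|$, yielding $|e^{\alpha}-p/q|>c'q^{-d2^d-\eps}$ and $|J_0(\alpha)-p/q|>c'q^{-d3^d-\eps}$.

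The argument is essentially a direct application of Theorem~\ref{th1}, so there is no serious obstacle; the only genuine content is the correct choice of systems --- in particular that $J_0$ forces $N=3$, since one must adjoin $J_0'$ to obtain a first-order system, and this is exactly what produces the worse exponent $3^d$ rather than $2^d$ --- together with the elementary reduction $H\le C|q|$ and the non-vanishing of $\Lambda$.
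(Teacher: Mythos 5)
Your proof is correct and follows exactly the route the paper intends: the corollary is stated there simply as a consequence of Theorem~\ref{th1} with $N=2$ (for $\tra(1,e^z)$) and $N=3$ (for $\tra(1,J_0,J_0')$), and you have supplied precisely the missing routine details --- the choice of systems, the non-vanishing of $\Lambda$ via transcendence of $e^{\alpha}$ and $J_0(\alpha)$, the identification $H=\max(|p|,|q|)\le C q$, and the bookkeeping that converts the exponent $-dN^d+1-\eps$ on $H$ into $-dN^d-\eps$ on $q$. No gaps.
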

The above mentioned general transcendence measure for $E$-functions due to Lang and Galochkin \cite[p.~238, Theorem~5.29 and remarks]{EMS}, 
refined by Kappe~\cite{kappe} for $e^{\alpha}$ in the special case of irrationality measures,  gives $4d^2-2d$ instead of $d2^d$, and $16d^3+1$ instead of $d3^d$; see also \cite[p.~404]{Shidlovskiilivre}. This is of course much better than Corollary~\ref{coro:mesexpj0} for large $d$ but our bounds turn out to be smaller for $d\in \{2,3\}$ and $d\in \{2,3,4,5\}$ respectively. Note that Zudilin \cite{zudilin} has obtained the optimal exponent 2 for $J_0(\alpha)$ when $\alpha \in \mathbb{Q}^*$.

 A less classical example is the following: for any $\alpha\in \Qbar^*$ and any integers $p,q\ge 1$, the value at $z=\alpha$ of  
 $\mathcal{A}_{p,q}(z):=\sum_{n=0}^\infty (\sum_{k=0}^n \binom{n}{k}^p\binom{n+k}{n}^q)z^n/n!$ is not a Liouville number; $\mathcal{A}_{p,q}(\alpha)$ is proved to be a transcendental number in \cite[\S 4.6]{BRS2} when $(p,q)\in \{1,2,3,4\}^2$, the situation in general being unknown. More specifically, it is also proved in \cite[\S 4.6, Table~1]{BRS2} that when $(p,q)=(2,2)$, the minimal inhomogeneous differential equation over $\mathbb Q(z)$ satisfied by $\mathcal{A}_{2,2}$ is of order $4$ and 0 is its only singularity. Hence, the following holds by Theorem~\ref{th1} with $N=5$: for all $\alpha\in \Qbar^*$ of degree $d$ over $\mathbb Q$ and all $\varepsilon>0$, there exists $c=c(\varepsilon, \alpha)>0$ such that for all $\lambda_j\in \mathbb Z$ not all 0, we have
$$
\Big\vert \lambda_4+\sum_{j=0}^3 \lambda_j \mathcal{A}_{2,2}^{(j)}(\alpha) \Big\vert >c 
\max_{0\le j\le 4}\vert \lambda_j\vert^{ -d5^d+1-\varepsilon}. 
$$
In particular, for any $\alpha\in \Qbar^*$ of degree $d$ and $\eps>0$, there exists $c=c(\varepsilon, \alpha)>0$ such that for any $(p,q)\in\Z\times \mathbb N$ with $q\neq0$, 
\begin{equation}\label{eq:A22}
\Big\vert \mathcal{A}_{2,2}(\alpha) -\frac{p}{q}\Big\vert >\frac{c}{q^{
d5^d+\varepsilon}}.
\end{equation}

It seems reasonable to make the following conjecture, which probably belongs to folklore. 
This conjecture is Roth's Theorem if $f(z_0)$ is algebraic.
\begin{conj} \label{conjmu2}
Let $f$ be an $E$-function and $z_0\in\Qbar$. For any $\eps>0$, there exists $c>0$ such that for any $(p,q)\in\Z\times \mathbb N$ with $q\neq 0$, either $q f(z_0)-p=0 $ or 
$$ \Big| f(z_0)-\frac p q \Big| > \frac c {q^{2+\eps}}.$$
\end{conj}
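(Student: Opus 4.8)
The plan is to split according to the arithmetic nature of $f(z_0)$. If $f(z_0)\in\Q$, say $f(z_0)=a/b$ in lowest terms, then for every $p/q\neq a/b$ one has $|f(z_0)-p/q|\ge 1/(bq)$, while $p/q=a/b$ gives $qf(z_0)-p=0$; so the inequality holds trivially. If $f(z_0)$ is algebraic irrational, the conjecture is \emph{exactly} Roth's Theorem applied to $f(z_0)$, as noted after the statement. Thus the entire difficulty lies in the case where $f(z_0)$ is transcendental, which is the generic situation.

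For transcendental $f(z_0)$, the natural first attempt is to invoke an irrationality measure of the type furnished by Theorem~\ref{th1}: one embeds the pair $(1,f)$ into a vector $\tra(1,f,f',\ldots)$ of $E$-functions satisfying a first-order system $\underline f'=A\underline f$ of some dimension $N$, and applies the theorem to the linear form $\lambda_1+\lambda_2 f(z_0)$ (setting the remaining $\lambda_j=0$). This does yield an inequality of the shape $|f(z_0)-p/q|>c\,q^{-\mu-\eps}$, but with an irrationality measure $\mu$ of size $dN^d$ (or, over $\Q$ via Theorem~\ref{thshid}, of size $N$). Since $N$ is essentially the order of the minimal differential equation satisfied by $f$, this exponent grows without bound with that order and, except when $N=2$, can never be brought down to the optimal value $2$ along this route. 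In short, the quality of the measure degrades with the dimension of the ambient differential system, whereas the conjecture demands a bound \emph{independent} of that dimension.

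Consequently a genuinely different input is needed, and the template is provided by the few special cases where the optimal exponent is already established: the classical continued-fraction argument for $e$, and Zudilin's optimal measure for $J_0(\alpha)$ with $\alpha\in\Q^*$. In each of these one constructs by hand an explicit sequence of Pad\'e-type rational approximations $p_n/q_n$ to $f(z_0)$ whose \emph{quality} $|q_n f(z_0)-p_n|\approx q_n^{-1}$ and whose \emph{density} $q_{n+1}\approx q_n$ are both essentially optimal; feeding these into the standard gap/transference principle then forces irrationality measure $2$. A proof of the conjecture in general would require carrying out such a construction uniformly for an arbitrary $E$-function at an arbitrary algebraic point.

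The main obstacle is precisely this last point: no method is known that decouples the irrationality measure of the single value $f(z_0)$ from the order of the differential equation it satisfies. Generic Pad\'e/Siegel-Shidlovskii constructions produce approximations of merely geometric quality with geometric denominators, giving a bounded but non-optimal exponent that worsens with the order; securing approximations of \emph{near-optimal} quality $q_n^{-1+o(1)}$ together with \emph{near-equal} successive denominators seems, for a general $E$-function, entirely out of reach. Absent either a Roth-type theorem valid for transcendental values of $E$-functions, or such a uniform optimal-quality Pad\'e construction, the statement must remain a conjecture --- which is of course exactly why it is labelled as one here.
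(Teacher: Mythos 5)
This statement is labelled as a conjecture in the paper, and the paper offers no proof of it: it only records that the case of algebraic $f(z_0)$ is Roth's Theorem and that Zudilin has established a stronger form under additional hypotheses. You have correctly recognized this. Your disposal of the easy cases is sound ($f(z_0)\in\Q$ gives $|f(z_0)-p/q|\geq 1/(bq)$ away from equality, and algebraic irrational $f(z_0)$ is exactly Roth), and your diagnosis of why the transcendental case is out of reach --- namely that Theorem~\ref{th1} and Theorem~\ref{thshid} only yield exponents growing with the dimension $N$ of the ambient differential system, optimal only in the $N=2$, $d=1$ situation of the exponential, with Zudilin's Pad\'e-type construction for $J_0$ as the only other known route to the exponent $2$ --- is consistent with the paper's own surrounding discussion. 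Since the paper contains no proof of this statement, there is no argument to compare yours against; declining to prove it is the correct outcome.
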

Zudilin has proved this conjecture in \cite{zudilin} in a stronger form but under additional assumptions (which even imply $f(z_0)\notin \mathbb Q$), namely:~$f$ is an $E$-function with rational coefficients, $z_0\in \mathbb Q^*$ is not a singularity of a differential system satisfied by~$1, f, f_2, \ldots, f_N$ over $\mathbb Q(z)$ for some $N\ge 2$ and $f, f_2, \ldots, f_N$ are algebraically independent over $\mathbb Q(z)$. It would be interesting to know if $\mathcal{A}_{2,2}, \mathcal{A}'_{2,2}, \mathcal{A}''_{2,2}, \mathcal{A}'''_{2,2}$ are algebraically independent over $\mathbb Q(z)$, in which case the exponent $5$ could be improved to 2 in \eqref{eq:A22} when $\alpha\in \mathbb Q^*$.

\bigskip

With the notation of  Theorem \ref{th1}, for any $D\geq 1$ one may consider the $\binom{N+D-1}{N-1 }$ functions $f_1^{i_1}\ldots f_N^{i_N}$, with non-negative integers $i_1$, \ldots, $i_N$ such that $i_1+\ldots+i_N=D$. They make up   a vector solution of a first order differential system, to which Theorem \ref{th1} applies. Taking $f_1=1$ and $f_2=f$, this provides the following transcendence measure, valid for any value of an $E$-function at an algebraic point.

\begin{coro}  \label{corotrmes} Let $f$ be an $E$-function, with coefficients in a number field $\mathbb{K}$ of degree $d$, solution of an inhomogeneous linear differential equation of order $N-1$. Let $z_0 \in \mathbb K$. Then for any  $D\geq 1$, any $\eps>0$, and  any $a_0,\ldots,a_D\in\Z$,   either $\sum_{j=0}^D a_j f(z_0)^j=0$ or
$$ \bigg| \sum_{j=0}^D a_j f(z_0)^j \bigg| \geq c \, \big( \max( |a_0|,\ldots, |a_D|)\big)^{-\kappa-\eps} $$ 
where $c$ depends only on $f$, $z_0$, $D$ and $\eps$, and 
$\kappa = d \binom{N+D-1}{N-1} ^d -1$.  
\end{coro}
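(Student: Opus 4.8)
The plan is to apply Theorem~\ref{th1} to a suitable auxiliary vector of $E$-functions built from the powers of $f$. First I would realize $f$ as a component of a first-order system: since $f$ satisfies an inhomogeneous linear differential equation of order $N-1$ over $\K(z)$, the functions $f_1 := 1$, $f_2 := f$, $f_3 := f'$, \ldots, $f_N := f^{(N-2)}$ make up a vector $\underline f = \tra(f_1,\ldots,f_N)$ of $E$-functions with coefficients in $\K$ satisfying a homogeneous first-order system $\underline f' = A\underline f$ for some $A \in M_N(\K(z))$, the constant $f_1=1$ absorbing the inhomogeneous term since $f_1'=0$.

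Next I would pass to the $D$-th symmetric power of this system. For each multi-index $(i_1,\ldots,i_N)$ of non-negative integers with $i_1+\cdots+i_N=D$, set $g_{\underline{i}} := f_1^{i_1}\cdots f_N^{i_N}$; there are $M := \binom{N+D-1}{N-1}$ such monomials. Each $g_{\underline{i}}$ is a product of $E$-functions, hence again an $E$-function with coefficients in $\K$. Differentiating by the Leibniz rule and substituting $f_k' = \sum_\ell A_{k\ell} f_\ell$ shows that $g_{\underline{i}}'$ is a $\K(z)$-linear combination of monomials of the same total degree $D$, so the vector $\underline g$ listing all the $g_{\underline{i}}$ satisfies a first-order system $\underline g' = B\underline g$ with $B \in M_M(\K(z))$.

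With $f_1=1$ and $f_2=f$, the monomials $f_1^{D-j}f_2^j = f^j$ for $j=0,1,\ldots,D$ all occur among the components of $\underline g$. Hence for any $a_0,\ldots,a_D\in\Z$ the quantity $\sum_{j=0}^D a_j f(z_0)^j$ equals $\sum_{\underline{i}}\lambda_{\underline{i}}\,g_{\underline{i}}(z_0)$, where $\lambda_{\underline{i}} := a_j$ when $g_{\underline{i}}=f^j$ and $\lambda_{\underline{i}}:=0$ otherwise; all these coefficients lie in $\Z\subset\OK$. I would then apply Theorem~\ref{th1} to the vector $\underline g$, of dimension $M$: whenever this linear form is non-zero it is bounded below by $c\,H^{-dM^d+1-\eps}$, with $H=\max_{\underline{i}}\house{\lambda_{\underline{i}}}$. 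Since the non-zero $\lambda_{\underline{i}}$ are rational integers, their houses equal their absolute values, so $H=\max(|a_0|,\ldots,|a_D|)$ and the exponent $-dM^d+1$ is exactly $-\kappa$, which is the bound claimed.

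The argument is a direct reduction, so I do not expect a serious obstacle; the only points needing (standard) care are that a product of $E$-functions is again an $E$-function and that the symmetric powers close up into a first-order system over $\K(z)$, both immediate from the defining properties and the Leibniz rule. Crucially, Theorem~\ref{th1} requires neither linear independence of the components of $\underline g$ over $\K(z)$ nor that $z_0$ avoid the singularities of $B$, so no further hypotheses on $f$ or $z_0$ have to be verified.
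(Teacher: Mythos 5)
Your proposal is correct and follows exactly the paper's (briefly sketched) argument: pass to the vector of degree-$D$ monomials $f_1^{i_1}\cdots f_N^{i_N}$ built from the first-order system satisfied by $(1,f,f',\ldots,f^{(N-2)})$, note that this vector of $\binom{N+D-1}{N-1}$ $E$-functions again satisfies a first-order system over $\K(z)$, and apply Theorem~\ref{th1} to the linear form with integer coefficients $a_j$ placed on the monomials $f^j=f_1^{D-j}f_2^j$. The bookkeeping of the exponent and the observation that no linear independence or non-singularity hypotheses are needed are both accurate.
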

With our notations, the transcendence measure of Lang-Galochkin provides the exponent $\frac{2^{N}(N-1)^{N-1}}{(N-1)!}d^{N}D^{N-1}$: for fixed $N$, it is smaller than $\kappa$ when $d$ and $D$ are large but it is sometimes larger for small values of $d$ or $D$ as mentioned after Corollary \ref{coro:mesexpj0}, and moreover it holds under an algebraic independence assumption on the $E$-functions in the differential system. No such assumption is necessary in Corollary~\ref{corotrmes}, which adds value to this result. Both results can be applied when   $f$ is the exponential function (with $N=2$): we obtain a transcendence measure of $e^{z_0}$ for any $z_0\in \Qbar^*$ of degree $d$ with $\kappa = d(D+1)^d-1$  whereas Lang-Galochkin's result yields $4d^2D$. Note that when $d=1$ and $N=2$ as for the exponential, we obtain $\kappa=D$ which is best possible. However this is not a new result for the exponential at rational points (see  
\cite[Chapter 10]{baker}, and  \cite[p.~135, Satz~3]{mahler} when $z_0=1$).

Corollary~\ref{corotrmes} shows that no value of an $E$-function at an algebraic point can be a $U$-number, in Mahler's classification (see for instance \cite[Chapter~3]{bugeaud}). It seems reasonable to conjecture that it can neither be a $T$-number, so that values of $E$-functions at algebraic points would be either algebraic numbers or $S$-numbers.

 \medskip

The second goal of this paper is to understand the structure of the ring $\EE $ of all values at algebraic points of $E$-functions; this algebraic point can always be assumed to be $1$ because if $f(z)$ is an $E$-function, so is $f(\alpha z)$ for any $\alpha\in \Qbar$. Elements of $\EE$ are related to exponential periods (see \cite[\S 4.3]{fuchsien}). 

For any subfield $\mathbb K$ of $\Qbar$, we shall also consider the subring $\EK$ of $\EE$ which consists of the evaluations $f(1)$ where $f$ is an $E$-function with coefficients in $\mathbb K$ (the number 1 could be replaced by any non-zero element of $\mathbb K$ without changing $\EK$).
Note that $\EE$ is the union of all $\EK$, where $\K$ is a number field, since for any $E$-function $f$ the holonomy property implies the existence of a number field that contains all coefficients of $f$.

We have defined and studied \cite{gvalues} analogous rings $\GG_\K$ with $G$-functions instead of $E$-functions; it turns out that $\GG_\K$ is nearly independent from $\K$ (precisely, $\GG_\K= \GG_\Q$ if $\K\subset\R$, and $\GG_\K=\GG_{\Q(i)}$ otherwise). The situation is completely different for $E$-functions. A first hint in this direction was given in \cite[Theorem 4]{ateo} (stated as Lemma~\ref{lemateo} in \S \ref{secdcp} below). The way $\EK$ depends on $\K$ is completely described by the following result.

\begin{theo} \label{theotens} Let $\K$ be a subfield of $\Qbar$. 
Elements of $\EK$ are linearly independent over $\Qbar$ if, and only if, they are linearly independent over $\K$.
\end{theo}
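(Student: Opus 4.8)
The plan is to prove the nontrivial implication ``linearly independent over $\K$ $\Rightarrow$ linearly independent over $\Qbar$'', the converse being immediate since $\K\subseteq\Qbar$ (a $\K$-linear relation is a $\Qbar$-linear relation). As linear independence is a finitary property, it suffices to show that for any $\xi_1,\dots,\xi_n\in\EK$ the $\K$-rank and the $\Qbar$-rank of $(\xi_1,\dots,\xi_n)$ coincide; since one always has $\mathrm{rank}_{\Qbar}\le\mathrm{rank}_{\K}$, the real task is the reverse inequality. Write $\xi_i=f_i(1)$ with $f_i$ an $E$-function with coefficients in $\K$ (recall $1$ may be taken as evaluation point). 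I would deduce the equality of ranks from the rank theorem of Siegel--Shidlovskii--Beukers at a \emph{regular} point, applied with the base field kept equal to $\K$.

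First I would set up a good system. By André's theory of $E$-operators \cite{andre}, each $f_i$ is annihilated by an operator whose only finite singularity is $0$; hence the $\K(z)\langle d/dz\rangle$-module generated by $f_1,\dots,f_n$ has its only finite singularity at $0$ and admits a $\K(z)$-basis $F_1,\dots,F_M$ made of $E$-functions with coefficients in $\K$ (choose a basis among the derivatives $f_i^{(k)}$, which are again $E$-functions over $\K$). These satisfy a system $\underline F'=A\underline F$ with $A\in M_M(\K(z))$ regular at $z_0=1$, so $1$ is an \emph{ordinary} point. Two elementary descent facts then hold: a Wronskian argument shows that power series with coefficients in $\K$ are linearly independent over $\K(z)$ if and only if they are linearly independent over $\Qbar(z)$ (the Wronskian has coefficients in $\K$, and its vanishing is field-independent); and the module of polynomial relations $\sum_i P_i f_i\equiv 0$ with $P_i\in\Qbar[z]$ is spanned by relations with $P_i\in\K[z]$, being the base change to $\Qbar$ of the $\K$-linear conditions imposed on the coefficients of the $P_i$.

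The heart of the matter is the rank/lifting statement at the ordinary point $z_0=1$: every $\Qbar$-linear relation $\sum_i c_i f_i(1)=0$ lifts to a polynomial relation $\sum_i R_i f_i\equiv 0$ with $R_i\in\Qbar[z]$ and $R_i(1)=c_i$. In the linear setting this is exactly the strong form of Beukers' Theorem~\ref{thbeukers} (equivalently Shidlovskii's lemma) at a non-singular point, and it is here that the paper's improvements of the André--Beukers theory enter. Granting it, the descent to $\K$ is immediate: fixing a $\K[z]$-basis $(S^{(s)}_i)_i$ of the (now $\K$-rational) module of polynomial relations, any lift $(R_i)$ of $(c_i)$ is a $\Qbar$-combination $\sum_s \beta_s (S^{(s)}_i)$, so that $c_i=R_i(1)=\sum_s \beta_s S^{(s)}_i(1)$ exhibits $(c_i)$ as a $\Qbar$-linear combination of the $\K$-rational vectors $\bigl(S^{(s)}_i(1)\bigr)_i$. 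Hence the space of $\Qbar$-linear relations among the $\xi_i$ is spanned by $\K$-rational relations, i.e. it is defined over $\K$, which forces $\mathrm{rank}_{\Qbar}=\mathrm{rank}_{\K}$ and settles the theorem. The main obstacle is precisely this quoted input --- the rank/lifting theorem at $z_0=1$ --- together with the structural fact, furnished by André, that $1$ is an ordinary point of a system that one may take defined over $\K$; by contrast, the passage from $\Qbar$ down to $\K$ is only elementary linear algebra over $\K[z]$.
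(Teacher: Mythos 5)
Your argument is essentially correct, but it takes a genuinely different route from the paper's. The paper's proof is a short Galois--trace argument: given a $\Qbar$-relation $\sum_i\lambda_i f_i(1)=0$ normalized so that $\lambda_1=1$, one applies Proposition~\ref{prop1} to the $E$-function $g=\sum_i\lambda_i f_i$ (which has coefficients in a finite Galois extension $\L$ of $\K$ and vanishes at $1$) to get $g^\sigma(1)=0$ for every $\sigma\in\Gal(\L/\K)$, i.e.\ $\sum_i\sigma(\lambda_i)f_i(1)=0$; summing over $\sigma$ produces the relation $\sum_i{\rm Tr}_{\L/\K}(\lambda_i)f_i(1)=0$ with coefficients in $\K$ and first coefficient $[\L:\K]\neq0$. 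Your proof instead invokes the full lifting form of the refined Siegel--Shidlovskii theorem (every $\Qbar$-relation among values at a non-singular point specializes from a $\Qbar[z]$-relation among the functions) and then descends the module of polynomial relations to $\K[z]$ by linear algebra. Both routes ultimately rest on Beukers' paper, but on different statements of it: the paper uses only the zero-factoring result (\cite[Proposition~4.1]{beukers}, via Proposition~\ref{prop1}), while you use \cite[Theorem~1.1]{beukers}. On that point you are slightly off target when you say the lifting statement ``is where the paper's improvements of the Andr\'e--Beukers theory enter'': it is already Theorem~1.1 of \cite{beukers} verbatim and needs none of the present paper's refinements; the $\K$-rationality you need is supplied, as you say, by elementary descent of the relation module. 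What your approach buys is more information (the whole relation space at $z=1$ is defined over $\K$, which is the linear-disjointness formulation); what the paper's approach buys is brevity and the avoidance of any differential system at all. Two small repairs are needed in your setup. First, a basis $F_1,\dots,F_M$ extracted from the derivatives $f_i^{(k)}$ need not satisfy a system regular at $1$: passing to a basis of a $\K(z)\langle d/dz\rangle$-module can introduce apparent singularities. This is harmless because Beukers' Theorem~1.1 carries no linear independence hypothesis, so you may simply work with the direct sum of the companion systems of the Andr\'e $E$-operators annihilating the $f_i$ (whose only finite pole is $0$), keeping $f_1,\dots,f_n$ themselves as coordinates; alternatively Proposition~\ref{prop2} desingularizes over $\K$. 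Second, the lifted relation will in general involve the derivative coordinates as well, so the $\K$-rational relations you produce at $z=1$ are relations among all of $f_i(1),f_i^{(k)}(1)$; you then need the (easy) remark that a $\Qbar$-subspace defined over $\K$ intersects the coordinate subspace ``derivative components $=0$'' in a subspace still defined over $\K$, which recovers the statement for the $f_i(1)$ alone.
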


We refer to \cite[Theorem 1.7]{AF} for a similar result concerning Mahler functions. 
Theorem~\ref{theotens} means that the $\K$-algebras $\EK$ and $\Qbar$ are linearly disjoint, and the natural map $\EK\otimes_\K \Qbar \to \EE $ (sending $\xi\otimes z$ to $\xi z$) is a $\K$-algebra isomorphism.
The definition and properties of linearly disjoint algebras can be found in \cite[Chapter V, \S 2, No. 5]{Bourbaki}; using Theorem~\ref{theotens} with $\K=\Q$ they imply the following.

\begin{coro}\label{cordcp}
Let $\K$ be a number field, and $(\omega_1,\ldots,\omega_d)$ be a basis of the $\Q$-vector space $\K$. Then 
$$\EK = \omega_1 \EQ\oplus \ldots \oplus \omega_d \EQ.$$
In other words, for any $\xi\in\EK$ 
there exists a unique $d$-tuple $(\xi_1,\ldots,\xi_d)\in\EQ^d$ such that $\xi = \omega_1\xi_1+\ldots+\omega_d\xi_d$.
\end{coro}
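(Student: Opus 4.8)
The plan is to deduce Corollary~\ref{cordcp} from Theorem~\ref{theotens} by invoking the standard theory of linearly disjoint algebras over a base field, applied with $\K$ itself as the ground field and $\Q$ as the distinguished subfield. The key observation is that Theorem~\ref{theotens} says precisely that the $\Q$-algebra $\EQ$ and the field $\K$ are linearly disjoint inside $\EE$ (equivalently, over $\overline{\mathbb Q}$, but we only need the $\K$-versus-$\Q$ statement): indeed, taking $\K$ in Theorem~\ref{theotens} to be $\Q$ tells us that elements of $\EQ$ that are linearly independent over $\Q$ remain linearly independent over $\Qbar$, hence \emph{a fortiori} over any number field $\K\subset\Qbar$. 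This is exactly the hypothesis needed to guarantee that the multiplication map $\EQ\otimes_\Q\K\to\EE$ is injective.

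First I would record that $\EK$ is a $\K$-subalgebra of $\EE$ containing $\EQ$, and that every element of $\K$ lies in $\EK$ (since constants are $E$-functions with coefficients in $\K$), so the product $\omega_i\xi$ with $\omega_i\in\K$ and $\xi\in\EQ$ indeed lies in $\EK$. Thus the sum $\omega_1\EQ+\ldots+\omega_d\EQ$ is contained in $\EK$. Next I would argue the reverse inclusion and the directness of the sum simultaneously. Fix a $\Q$-basis $(\omega_1,\ldots,\omega_d)$ of $\K$. For the directness, suppose $\omega_1\xi_1+\ldots+\omega_d\xi_d=0$ with $\xi_i\in\EQ$; I must show all $\xi_i$ vanish. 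If some $\xi_i\neq 0$, then writing the relation as a nontrivial $\Qbar$-linear (indeed $\K$-linear) combination of the $\xi_i$, the $\K$-linear independence of the $\omega_i$ would be contradicted once we know the $\xi_i$ span a space on which the $\omega_i$ act freely; more cleanly, I would pass through the tensor product and use Theorem~\ref{theotens} to conclude that the natural surjection $\EQ\otimes_\Q\K\to\EK$ is an isomorphism, whence the decomposition and its uniqueness follow formally.

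Concretely, the cleanest route is to invoke the linear-disjointness formalism of \cite[Chapter V, \S 2, No. 5]{Bourbaki} exactly as the text already announces: Theorem~\ref{theotens} with $\K=\Q$ establishes that $\EQ$ and $\Qbar$ are linearly disjoint over $\Q$, and restricting the $\Qbar$ side to $\K$ shows $\EQ$ and $\K$ are linearly disjoint over $\Q$ inside $\EE$. Linear disjointness means precisely that a $\Q$-basis of $\K$ remains $\EQ$-linearly independent and that products of $\Q$-bases of the two factors form a basis of the compositum $\EQ\cdot\K$; since $\EQ\cdot\K=\EK$ (every coefficient-in-$\K$ $E$-function value is a $\K$-combination of coefficient-in-$\Q$ $E$-function values, by splitting coefficients along the basis $(\omega_i)$), the products $\omega_i\cdot(\text{basis of }\EQ)$ form a $\Q$-basis of $\EK$, which is exactly the asserted direct sum $\EK=\omega_1\EQ\oplus\ldots\oplus\omega_d\EQ$, together with existence and uniqueness of the $d$-tuple $(\xi_1,\ldots,\xi_d)$.

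I expect the only genuine point requiring care — the main (modest) obstacle — to be the identification $\EQ\cdot\K=\EK$, i.e.\ checking that the compositum of $\EQ$ and $\K$ really recovers all of $\EK$ rather than a proper subring; this amounts to verifying that for an $E$-function $f$ with coefficients in $\K$, the value $f(1)$ decomposes as $\sum_i\omega_i\,g_i(1)$ with each $g_i$ an $E$-function with coefficients in $\Q$, which follows by writing each Taylor coefficient of $f$ in the basis $(\omega_i)$ and grouping terms. Everything else is a direct and formal transcription of linear disjointness, so the corollary is immediate once Theorem~\ref{theotens} is in hand.
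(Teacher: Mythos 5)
Your proposal is correct and follows essentially the same route as the paper, which likewise derives Corollary~\ref{cordcp} from Theorem~\ref{theotens} applied with $\K=\Q$ via the formalism of linearly disjoint algebras from \cite[Chapter V, \S 2, No. 5]{Bourbaki}. The one point you rightly flag, namely $\EQ\cdot\K=\EK$ via splitting Taylor coefficients along the basis $(\omega_1,\ldots,\omega_d)$, is exactly the standard fact the paper invokes elsewhere as \cite[Chapter 3, Lemma~12]{Shidlovskiilivre} (the pieces $g_i$ are $E$-functions because they are fixed $\Qbar$-linear combinations of the conjugates $f^\sigma$), so your argument is complete.
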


\medskip

The original definition of $E$-functions, given by Siegel \cite{siegel}, is slightly less restrictive:
instead of geometric bounds, 
he allowed growths bounded by $n!^{\varepsilon}$ (for any given $\varepsilon>0$, provided $n$ is large enough with respect to $\varepsilon$). 
Shidlovskii's Theorem~\ref{thshid} holds for $E$-functions in Siegel's sense, and
 Beukers' Theorem~\ref{thbeukers} was later proved by Andr\'e \cite{andreasens} in this general setting, by a different method. All other results in Beukers' paper \cite{beukers} have been adapted by Lepetit \cite{lepetit}. Therefore all the results of the present paper also hold for $E$-functions in Siegel's sense.

\medskip

The structure of this paper is as follows. In \S \ref{sec2} we prove Proposition~\ref{prop1}, 
which is crucial in the proof of both Theorems \ref{th1} and \ref{theotens}. We also derive from it 
 a version of Beukers' desingularization process over a number field. This enables us to prove Theorem~\ref{th1} in \S \ref{secpreuveth1}, and also to obtain in \S \ref{secdcp} a decomposition of an $E$-function over a number field, involving an $E$-function that takes only transcendental values at non-zero algebraic points. At last we apply the previous results in \S \ref{sec5} to study the structure of $\EK$ and prove Theorem~\ref{theotens}. 
The final section is devoted to an action of $\Gal(\Qbar/\Q)$ on the set of values of $E$-functions; it is not used in the paper but it illustrates how our method presents similarities with Liouville's theorem.

\bigskip

\noindent {\bf Acknowledgments.} We warmly thank the referees for their very contructive comments.

\section{Main tools}\label{sec2}

\subsection{Conjugates of $E$-functions} \label{ssec:conjugates}

Let $f(z) = \sum_{n=0}^\infty a_n z^n/n!$ be an $E$-function with coefficients $a_n\in\Qbar$. For any $\sigma \in\Gal(\Qbar/\Q)$ we let $f^\sigma(z) = \sum_{n=0}^\infty \sigma(a_n) z^n/n!$. The definition shows that  $f^\sigma$ is also an $E$-function, and if $g$ is an $E$-function then for any $\sigma ,\tau$ we have 
$(f+g)^\sigma = f^\sigma+g^\sigma$, $ (f g)^\sigma = f^\sigma g^\sigma$ and $(f^\sigma )^\tau = f^{\tau\circ\sigma}$. Moreover if $f$ has coefficients in a number field $\K$, then $f^\sigma$ has coefficients in the number field $\sigma(\K)$.

\medskip

\begin{Remark*} 
Denoting by $\sigma $ the complex conjugation, for any $E$-function $f$ we can consider $\frac12(f+f^\sigma)$ and $\frac1{2i} (f-f^\sigma)$. These $E$-functions have real coefficients, which are respectively the real and imaginary parts of those of $f$. In particular, the real and imaginary parts of any element of $\EE$ belong to $\EE$.\end{Remark*}

\medskip

The following result is central in the present paper; we refer to \cite[Proposition 3.5]{BRS2} for a similar result. We agree that minimal polynomials of algebraic elements have leading coefficient 1.

\begin{prop}\label{prop1} Let $f$ be an $E$-function with coefficients in a number field $\K$, and $z_0\in \Qbar^\ast$. Then the following assertions are equivalent:
\begin{itemize}
\item[$(i)$] $f$ vanishes at $ z_0 $.
\item[$(ii)$] There exists $\sigma \in\Gal(\Qbar/\Q)$ such that $f^\sigma $ vanishes at $ \sigma(z_0) $.
\item[$(iii)$] For any $\sigma \in\Gal(\Qbar/\Q)$, $f^\sigma $ vanishes at $ \sigma(z_0) $.
\item[$(iv)$] There exists an $E$-function $g$ with coefficients in $\K$ such that 
$$f(z) = D(z) g(z) \mbox{ where $D$ is the minimal polynomial of $z_0$ over $\K$.}$$
\end{itemize}
\end{prop}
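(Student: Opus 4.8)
The plan is to reduce the whole equivalence to the single implication $(i)\Rightarrow(iv)$ and then transport it by Galois conjugation. The reverse implication $(iv)\Rightarrow(i)$ is immediate, since $f(z_0)=D(z_0)g(z_0)=0$. The analytic engine I would use throughout is the following closure property, which I take as the key lemma: if $h$ is an $E$-function with coefficients in a number field $\L$ and $P\in\L[z]$ satisfies $P(0)\neq 0$, and if the formal quotient $h/P\in\L[[z]]$ happens to define an entire function, then $h/P$ is again an $E$-function with coefficients in $\L$. In particular, for a linear factor, $h(\alpha)=0$ with $\alpha\in\Qbar^\ast$ implies that $h/(z-\alpha)$ is an $E$-function.

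Granting this, here is how I would prove $(i)\Rightarrow(iv)$. Write $D$ for the minimal polynomial of $z_0$ over $\K$, with its distinct (since $D$ is separable) roots $z_0=z_0^{(1)},\dots,z_0^{(r)}$, the $\K$-conjugates of $z_0$. The crucial point is that $f$ must vanish at every $z_0^{(j)}$, not only at $z_0$. Indeed, since $f(z_0)=0$ the closure lemma makes $f_1:=f/(z-z_0)$ an $E$-function, hence an entire function. For each $j$ choose $\tau\in\Gal(\Qbar/\Q)$ fixing $\K$ with $\tau(z_0)=z_0^{(j)}$; then $f^\tau=f$ because the coefficients of $f$ lie in $\K$, and applying $\tau$ to the coefficient identity $(z-z_0)f_1=f$ gives $(z-z_0^{(j)})f_1^\tau=f$. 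Now $f_1^\tau$ is a conjugate of an $E$-function, hence itself an $E$-function and in particular entire; therefore $f/(z-z_0^{(j)})=f_1^\tau$ has no pole at $z_0^{(j)}$, which forces $f(z_0^{(j)})=0$. Thus $f$ vanishes at all roots of $D$. Since $D(0)\neq 0$ (because $z_0\neq 0$), the formal quotient $g:=f/D$ lies in $\K[[z]]$, and having cancelled all the simple poles it is entire; the closure lemma then shows $g$ is an $E$-function with coefficients in $\K$, and $f=Dg$ is exactly $(iv)$.

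It remains to organise the logical cycle. The implication $(iii)\Rightarrow(ii)$ is trivial, and $(iii)$ contains $(i)$ by taking $\sigma=\mathrm{id}$. For $(iv)\Rightarrow(iii)$, from $f=Dg$ I get $f^\sigma=D^\sigma g^\sigma$ for every $\sigma\in\Gal(\Qbar/\Q)$; here $D^\sigma(\sigma(z_0))=\sigma(D(z_0))=0$ and $g^\sigma$ is an $E$-function, so $f^\sigma(\sigma(z_0))=0$. Finally, for $(ii)\Rightarrow(iv)$ I apply the already-proved implication $(i)\Rightarrow(iv)$ to the $E$-function $f^\sigma$ (whose coefficients lie in $\sigma(\K)$) at the point $\sigma(z_0)$: this yields $f^\sigma=D'g'$ with $D'$ the minimal polynomial of $\sigma(z_0)$ over $\sigma(\K)$ and $g'$ an $E$-function, and applying $\sigma^{-1}$ to the coefficients transports this, via $(f^\sigma)^{\sigma^{-1}}=f$, into a factorization $f=Dg$ over $\K$, i.e. $(iv)$. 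Chaining $(i)\Rightarrow(iv)\Rightarrow(iii)\Rightarrow(ii)\Rightarrow(iv)\Rightarrow(i)$ closes the loop.

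I expect the main obstacle to be the closure lemma itself, i.e. proving that dividing an $E$-function by a polynomial yields an $E$-function whenever the quotient is entire. The growth of the Taylor coefficients at $0$ of $h/P$ is controlled by the fact that an $E$-function has finite exponential type, so $h/P$ is entire of finite exponential type and Cauchy's estimates give the required geometric bound on the coefficients and on their houses; the delicate part is the arithmetic one, namely bounding the least common denominator of the coefficients of $h/P$, which has to be extracted from the denominators of the coefficients of $h$ together with those of the expansion of $1/P$ at $0$. This is where I would either invoke the standard stability properties of the class of $E$-functions or carry out the explicit denominator estimate.
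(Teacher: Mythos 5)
Your overall architecture coincides with the paper's: both proofs run on the same two gears, namely (a) the division lemma asserting that an $E$-function vanishing at $\alpha\in\Qbar^\ast$ remains an $E$-function after division by $z-\alpha$ (this is Proposition~4.1 of Beukers' paper, which the paper quotes), and (b) the observation that for $\tau\in\Gal(\Qbar/\K)$ one has $f^\tau=f$, so that the factorization $f=(z-z_0)f_1$ transports to $f=(z-\tau(z_0))f_1^\tau$ and forces $f$ to vanish at every $\K$-conjugate of $z_0$. The paper then divides out the linear factors one at a time by induction and notes that the quotient by $D\in\K[z]$ automatically has coefficients in $\K$; you divide by $D$ in one stroke, and you close the cycle as $(i)\Rightarrow(iv)\Rightarrow(iii)\Rightarrow(ii)\Rightarrow(iv)$ rather than the paper's $(iv)\Rightarrow(iii)\Rightarrow(ii)\Rightarrow(i)\Rightarrow(iv)$; these are cosmetic differences. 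Your $(ii)\Rightarrow(iv)$ step (apply the already-proved $(i)\Rightarrow(iv)$ to $f^\sigma$ over $\sigma(\K)$ and pull everything back by $\sigma^{-1}$) is correct and neatly replaces the paper's device of enlarging $\K$ to a Galois closure in the step $(ii)\Rightarrow(i)$.

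The one genuine issue is your treatment of the ``closure lemma''. In the paper it is not proved but cited, and it is a deep statement, not a routine stability property of the class of $E$-functions. Your proposed proof of it fails at exactly the step you call delicate, but for a different reason than the one you identify. Writing $h/(z-\alpha)=\sum_n (b_n/n!)z^n$, Cauchy's estimates (using that $h$ vanishes at $\alpha$ and has finite exponential type) do give a geometric bound on $|b_n|$ for the fixed embedding $\Qbar\hookrightarrow\C$, and the least common denominators are indeed elementary bookkeeping; but the definition of an $E$-function requires a geometric bound on the houses $\house{b_n}$, that is on $|\sigma(b_n)|$ for \emph{every} $\sigma\in\Gal(\Qbar/\Q)$. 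Now the $\sigma(b_n)$ are (up to the factor $n!$) the coefficients of $h^\sigma/(z-\sigma(\alpha))$, and to bound them by the same analytic argument you need to know that $h^\sigma$ vanishes at $\sigma(\alpha)$ --- which is precisely the implication $(i)\Rightarrow(iii)$ you are in the middle of proving. Beukers breaks this circle by deducing the vanishing of the conjugate functions at the conjugate points from his refined Siegel--Shidlovskii theorem, which rests on Andr\'e's theory of $E$-operators; the paper explicitly remarks that $(i)\Rightarrow(iii)$ is already embedded in the proof of his Proposition~4.1. So your argument is correct if you invoke that proposition as a black box, as the paper does, but it becomes circular if you try to establish the lemma by the route you sketch.
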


In particular, if $z_0$ is rational and $f$ vanishes at $z_0$, then all conjugates $f^\sigma$ of $f$ also vanish at $z_0$.
Also, if an $E$-function $f$ with rational coefficients vanishes at some $z_0\in \Qbar^\ast$, then it vanishes at all Galois conjugates of $z_0$.

\medskip

We remark that with $z_0=1$, the implication $(i)\Rightarrow (iii)$ is used already in the proof of \cite[Proposition~4.1]{beukers}, which is the main result Proposition~\ref{prop1} is based on.

\medskip

\begin{proof}[Proof of Proposition~\ref{prop1}] $(iv)\Rightarrow (iii)$ Let $\sigma \in\Gal(\Qbar/\Q)$. Then $f^\sigma(z) =D^\sigma(z) g^\sigma(z) $, and $D^\sigma(\sigma(z_0))=\sigma(D(z_0))=0$. Therefore $f^\sigma(\sigma(z_0))=0$.

\smallskip

$(iii)\Rightarrow (ii)$ is trivial.

\smallskip

$(ii)\Rightarrow (i)$ Enlarging $\K$ if necessary, we may assume the extension $\K/\Q$ to be Galois and to contain $z_0$. Then $f^\sigma$ has coefficients in $\K$, and $\sigma(z_0)\in\K^\ast$. Using \cite[Proposition~4.1]{beukers} there exists an $E$-function $g$ such that $f^\sigma(z) = (z-\sigma(z_0))g(z)$. Then $g$ has coefficients in $\K$; applying $\sigma^{-1}$ yields $f(z) = (z-z_0)g^{\sigma^{-1}}(z)$ so that $f(z_0)=0$. 

\smallskip

$(i)\Rightarrow (iv)$ Using \cite[Proposition~4.1]{beukers} there exists an $E$-function $h$ such that $f (z) = (z- z_0 )h(z)$. Let $\sigma \in\Gal(\Qbar/\K)$, that is: $\sigma$ is a field automorphism of $\Qbar$ such that $\sigma(x)=x $ for any $x\in \K$. Then $f(z) = f^\sigma(z) = (z-\sigma(z_0))h^\sigma(z)$ so that $f$ vanishes at $\sigma(z_0)$. Let $z_1:=z_0$, $z_2$, \ldots, $z_\ell$ denote the (pairwise distinct) Galois conjugates of $z_0$ over $\K$, i.e. the elements of the form $\sigma(z_0)$ with $\sigma \in\Gal(\Qbar/\K)$; we have proved that $f$ vanishes at $z_1$, \ldots, $z_\ell$. Applying \cite[Proposition~4.1]{beukers} yields, by induction on $j\in\{1,\ldots, \ell\}$, the existence of an $E$-function $g_j$ such that $f (z) = g_j(z) \prod_{i=1}^j (z-z_i)$. Since $D(z) = \prod_{i=1}^\ell (z-z_i)$, we have $f(z) = D(z) g_\ell(z)$. Now $D(z)\in\K[z]\setminus\{0\}$ so that all coefficients of $g_\ell$ belong to $\K$. This concludes the proof of Proposition~\ref{prop1}.
\end{proof}

\subsection{Beukers' desingularization process}

In the proof of Theorem~\ref{th1} we shall use the following version of Beukers' desingularization theorem  \cite[Theorem 1.5]{beukers}.

\begin{prop}\label{prop2} 
Let $\K$ be a number field, and $f_1,\ldots,f_N$ be $E$-functions with coefficients in $\K$, linearly independent over $\C(z)$. Assume that the vector $\underline f = \tra (f_1,\ldots,f_N)$ satisfies a first-order differential system $\underline f '=A\underline f $ with $A\in M_N(\K(z))$. 

Then there exist $E$-functions $e_1,\ldots,e_N$ with coefficients in $\K$, linearly independent over $\C(z)$, a matrix $B\in M_N(\K[z,1/z])$ and a matrix $M\in M_N(\K[z ])$, such that with $\underline e = \tra (e_1,\ldots,e_N)$:
$$ \underline e '=B\underline e\quad \mbox{ and }\quad \underline f = M \underline e.$$
\end{prop}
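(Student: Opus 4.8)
The plan is to deduce Proposition~\ref{prop2} from Beukers' desingularization theorem \cite[Theorem 1.5]{beukers} by carefully controlling the field of definition of all objects involved. Beukers' theorem, applied over $\Qbar$, produces $E$-functions $e_1,\ldots,e_N$, linearly independent over $\C(z)$, together with matrices $B\in M_N(\Qbar[z,1/z])$ and $M\in M_N(\Qbar[z])$ such that $\underline e\,'=B\underline e$ and $\underline f=M\underline e$. The content of the present strengthening is that when $f_1,\ldots,f_N$ already have coefficients in $\K$, one may arrange that $e_1,\ldots,e_N$, $B$ and $M$ likewise have coefficients in $\K$. So the real work is a descent argument from $\Qbar$ to $\K$.

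First I would apply the Galois conjugation machinery of \S\ref{ssec:conjugates}. The key observation is that the hypotheses are Galois-stable: since each $f_i$ has coefficients in $\K$, for any $\sigma\in\Gal(\Qbar/\Q)$ fixing $\K$ we have $f_i^\sigma=f_i$, and the system $\underline f\,'=A\underline f$ with $A\in M_N(\K(z))$ is also fixed by $\sigma$. Now apply $\sigma$ to the entire output of Beukers' theorem: because $(fg)^\sigma=f^\sigma g^\sigma$ and $(f+g)^\sigma=f^\sigma+g^\sigma$, the conjugated data $\underline e^{\,\sigma}$, $B^\sigma$, $M^\sigma$ satisfy $(\underline e^{\,\sigma})'=B^\sigma\underline e^{\,\sigma}$ and $\underline f=\underline f^{\,\sigma}=M^\sigma\underline e^{\,\sigma}$, i.e.\ they form another valid desingularizing datum for the same system. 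The functions $e_1^\sigma,\ldots,e_N^\sigma$ are still $E$-functions linearly independent over $\C(z)$. Thus Galois conjugation permutes (a suitable class of) the solutions produced by the theorem.

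The main obstacle, and the heart of the proof, is to turn this Galois-invariance into an \emph{actual} choice defined over $\K$. Beukers' construction does not literally commute with $\sigma$ on the nose, so $\underline e^{\,\sigma}$ need not equal $\underline e$; rather, both are bases of the same finite-dimensional $\C(z)$-space $W$ spanned by the entries of $\underline e$ (equivalently, by the entries of $\underline f$ and their derivatives under the connection). The strategy I would pursue is to identify $W$ intrinsically from $\underline f$ and $A$ — as the smallest $B$-stable, $\tfrac{d}{dz}$-compatible $\Qbar(z)$-space containing the $f_i$ that yields a holomorphic (nonsingular away from $z=0$) connection — and to show this space is defined over $\K$, being generated by $\K$-rational data. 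Since the $f_i$ themselves lie in the $\K$-structure and the desingularization at each finite nonzero point proceeds (via Proposition~\ref{prop1}, or rather its underlying \cite[Proposition~4.1]{beukers}) by factoring out minimal polynomials over $\K$, one can run Beukers' inductive desingularization \emph{entirely within the $\K$-rational category}: at each step the apparent singularity to be removed is $\Gal(\Qbar/\K)$-stable, hence cut out by a polynomial in $\K[z]$, and the gauge transformation removing it has entries in $\K(z)$.

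Concretely, I would carry out the steps in this order. (1) Apply \cite[Theorem~1.5]{beukers} to get a desingularized system over $\Qbar$, recording that the new basis $\underline e$ spans the same $\Qbar(z)$-module $\mathcal{M}$ generated by $\underline f$ under the connection $\underline f\,'=A\underline f$. (2) Show $\mathcal{M}$ descends to a $\K(z)$-module $\mathcal{M}_\K$: its generators $f_i$ and the matrix $A$ are $\K$-rational, so $\mathcal{M}_\K\otimes_\K\Qbar=\mathcal{M}$. (3) Re-examine Beukers' removal of each apparent singularity $\rho\in\Qbar^*$: the local gauge transformation depends only on the local exponents and the $\K$-rational connection, and the full $\Gal(\Qbar/\K)$-orbit of $\rho$ is removed simultaneously by a transformation with entries in $\K[z,1/z]$ (the relevant polynomial factors assemble into $\K$-rational ones, exactly as the minimal polynomial $D$ appears in Proposition~\ref{prop1}$(iv)$). (4) Conclude that the resulting $B\in M_N(\K[z,1/z])$, that the change-of-basis matrix $M\in M_N(\K[z])$, and hence that a $\K$-rational basis $\underline e=M^{-1}\underline f$ (or directly the basis produced by the $\K$-descended construction) consists of $E$-functions with coefficients in $\K$; linear independence over $\C(z)$ is preserved since it already holds over $\Qbar(z)$ and is detected by a nonvanishing Wronskian-type determinant. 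I expect step~(3), verifying that Beukers' desingularization can be performed orbit-by-orbit with $\K$-rational gauges, to be the technically delicate point, but it follows the same pattern as the proof of Proposition~\ref{prop1}, where factoring by the minimal polynomial $D\in\K[z]$ keeps one inside the $\K$-rational world.
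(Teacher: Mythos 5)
Your operative strategy---rerunning Beukers' desingularization induction entirely over $\K$, removing each full $\Gal(\Qbar/\K)$-orbit of singularities at once by factoring out its minimal polynomial via Proposition~\ref{prop1}$(iv)$---is exactly the paper's proof, which likewise only records the $\K$-rational modifications to \cite[p.~378]{beukers}: the relation $\sum_{j} P_j(\alpha)f_j(\alpha)=0$ extracted from $Q^kA$ with coprime $P_j\in\K[z]$, the completion of $(P_1,\ldots,P_N)$ to a matrix $S\in M_N(\K[z])$ of determinant $1$, and the division of the first coordinate of $S\underline f$ by the minimal polynomial $D\in\K[z]$. Your opening Galois-descent-of-the-output discussion is a detour you rightly abandon, and the ``technically delicate'' step~(3) you defer is precisely the content the paper supplies.
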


 The new point is that $e_1,\ldots,e_N$ and the coefficients of $B$ and $M$ have coefficients in the number field $\K$ (whereas in \cite[Theorem 1.5]{beukers} these   are simply algebraic numbers).
 
\medskip

The proof of Proposition~\ref{prop2}  follows~\cite[p.~378]{beukers}, using also the additional details given in \cite{BRS2}. Actually Proposition~\ref{prop2} is already proved
implicitly (for $\K=\Q$) by the implementation described in~\cite{BRS2}. 

\medskip

In what follows we  only mention the parts of the proof where   special attention has to be paid. Let $\alpha$ be a singularity of the differential system $Y'=AY$, and $Q\in\K[X]$ denote the minimal polynomial of $\alpha$ over $\K$. Let $k\geq 1$ be the maximal order of $\alpha$ as a pole of a coefficient of $A$, and $(i_0,j_0)$ be such that $A_{i_0,j_0}$ has a pole of order exactly $k$ at $\alpha$. Then $Q^k A\underline f = Q^k \underline f'$ vanishes at $\alpha$; the $i_0$-th coordinate of this vector provides a linear relation
 $$ \sum_{j=1}^N (Q^k A_{i_0,j})(\alpha) f_j(\alpha)=0.$$
 Note that for any $j$, the rational function $Q(z)^k A_{i_0,j}(z)\in\K(z)$ is holomorphic at $\alpha$, and for $j=j_0$ it does not vanish at that point. Multiplying by the least common denominator of the rational functions $Q(z)^k A_{i_0,j}(z) $    we obtain coprime polynomials $P_1,\ldots,P_N\in\K[z]$ such that 
 $$P_{j_0}(\alpha)\neq 0 \quad \mbox{ and }\quad \sum_{j=1}^N P_j(\alpha) f_j(\alpha)=0.$$
  If $N=1$ we let $P_{1,1}=1$; otherwise there exist polynomials $P_{i,j}\in\K[z]$, for $2\leq i \leq N$ and $1\leq j \leq N$, such that letting $P_{1,j}=P_j$, the matrix $S = (P_{i,j})_{1\leq i,j\leq N}\in M_N(\K[z])$ has determinant 1. Then $S \underline f $ is a vector of $E$-functions, with coefficients in $\K$, of which the first coordinate $ \sum_{j=1}^N P_j(z) f_j(z)$ vanishes at $ \alpha $. Using Proposition~\ref{prop1}, we deduce that $ \sum_{j=1}^N P_j(z) f_j(z)$ vanishes at $\sigma(\alpha)$ for any $\sigma\in\Gal(\Qbar/\K)$.  This concludes the proof as in \cite{beukers}.

\section{Proof of Theorem~\ref{th1}} \label{secpreuveth1}

To prove Theorem~\ref{th1} we assume $z_0\neq 0$ since otherwise,  $f_1(z_0),\ldots,f_N(z_0)$ are algebraic numbers and  the conclusion follows from Schmidt's subspace theorem (see for instance \cite[Chapter 1, \S 8.2, Theorem~1.37]{EMS}). Considering the $E$-functions $f_j(z_0z)$ instead of $f_j(z)$, we may even assume $z_0=1$. We also suppose that $\Lambda\neq 0$. 

The proof of Theorem~\ref{th1} falls into 2 steps.

\bigskip

\noindent {\bf Step 1}. Let us prove Theorem~\ref{th1} in the special case where $\K=\Q$ (i.e., $d=1$). In other words, we assume that $z_0=1\in\Q$, $\lambda_1,\ldots,\lambda_N\in\Z$, and $f_1,\ldots,f_N$ have coefficients in $\Q$.

Recall that we do not assume that $f_1(1),\ldots,f_N(1)$ are linearly independent over $\Q$. Let $N'$ be the maximal number of linearly independent numbers among them. We may assume (up to a permutation of the indices) that $f_1(1),\ldots,f_{N'}(1)$ are linearly independent over $\Q$, and $f_{N'+1}(1),\ldots,f_{N}(1)$ belong to the $\Q$-vector space they span. There exist rational numbers $\varrho_{i,j} $ such that
 $f_j(1) = \sum_{i=1}^{N'} \varrho_{i,j} f_i(1)$ for any $1\leq j \leq N$, so that 
\begin{equation}\label{eqlam2}
\Lambda := \sum_{j=1}^N \lambda_j f_j(1) = \sum_{i=1}^{N'} \mu_i f_i(1) \quad \mbox{ with} \quad 
\mu_i := \sum_{j=1}^N \lambda_j \varrho_{i,j}\in\Q.
\end{equation}

Observe that the $E$-functions $f_1,\ldots,f_{N'}$ are linearly independent over $\C(z)$. Indeed, otherwise they would be linearly dependent over $\Q(z)$ (since they have coefficients in $\Q$), and a relation $\sum_{j=1}^{N'} S_j(z) f_j(z)=0$ would exist with $S_1, \ldots, S_{N'}\in \Q(z)$ not all zero. Upon multiplying by $(z-1)^k$ for a suitable $k\in\Z$, we may assume that none of the $S_j$ has a pole at 1, and that at least one of them does not vanish at 1. This provides a non-trivial linear relation $\sum_{j=1}^{N'} S_j(1) f_j(1)=0$, which contradicts the definition of $N'$. 

Therefore $f_1,\ldots,f_{N'}$ are linearly independent over $\C(z)$. Denote by $N''$ the dimension of the vector space generated over $\C(z)$ by $f_1,\ldots,f_{N}$; we have $N'\leq N''\leq N$. Notice that it could happen that $N''>N'$, for instance if $f_{N'+1}(1)=0$. Up to a permutation of the indices, we may assume that $f_1,\ldots,f_{N''}$ are linearly independent over $\C(z)$, and that 
$f_{N''+1},\ldots,f_{N}$ belong to the vector space they span over $\C(z)$.

Since $f_1,\ldots,f_{N''}$ are linearly independent over $\C(z)$, and satisfy a linear differential system of order 1 by definition of $N''$, Proposition~\ref{prop2} (applied with $\mathbb K=\mathbb Q$) provides $E$-functions $e_1$, \ldots, $e_{N''}$ with rational coefficients and matrices $B\in M_{N''}(\Q[z,1/z])$ and $M = (P_{i,j})\in M_{N''}(\Q[z])$  such that $\underline e' = B \underline e$ and $f_i(z) = \sum_{j=1}^{N''} P_{i,j}(z)e_j(z)$. Since $N'\leq N''$, Eq.~\eqref{eqlam2} yields
\begin{equation}\label{eqlam3}
\Lambda = \sum_{j=1}^{N''} \nu_j e_j(1) \quad \mbox{with} \quad 
\nu_j := \sum_{i=1}^{N'} \mu_i P_{i,j}(1)\in\Q.
\end{equation}
Now Shidlovskii's lower bound stated as Theorem~\ref{thshid} in the introduction 
applies to the $E$-functions $e_1$, \ldots, $e_{N''}$ with rational coefficients, which are linearly independent over $\C(z)$ and solution of a linear differential system of order 1 of which 1 is not a singularity. Denoting by $\delta$ a common denominator of the rational numbers $P_{i,j}(1)$ and $\varrho_{i,j} $ (appearing in Eqs.~\eqref{eqlam2} and \eqref{eqlam3}), we obtain that $\delta^2 \Lambda $ is a $\Z$-linear combination of $e_1(1)$, \ldots, $e_{N''}(1)$ with coefficients bounded (in absolue value) by $cH$, where $c>0$ and $\delta$ depend only on $f_1$, \ldots, $f_N$. For any $\eps>0$, Theorem~\ref{thshid} yields $ |\delta^2 \Lambda |>c_0 H^{-N''+1-\eps}\geq c_0 H^{-N+1-\eps}$ for some $c_0>0$ which depends only on $f_1$, \ldots, $f_N$ and $\eps$. This concludes the proof of Theorem~\ref{th1} in the case where $\K=\Q$.

\medskip

\noindent {\bf Step 2}. Let us prove Theorem~\ref{th1} for any number field $\K$. For simplicity of the exposition, we assume $\K$ to be a Galois extension of $\Q$; see the end of Step 2 for the general case. We denote by $G$ the Galois group of $\K/\Q$, and consider the complex number 
\begin{equation} \label{eqdefvarpi}
\varpi := \prod_{\sigma\in G} \Big( \sum_{j=1}^N \sigma(\lambda_j) f_j^\sigma(1)\Big).
\end{equation}

To begin with, let us prove that $ \varpi \neq 0$. Indeed, consider the $E$-function $g(z) = \sum_{j=1}^N \lambda_j f_j(z)$; it has coefficients in $\K$ (because $f_1$, \ldots, $f_N$ do), and $g(1) = \Lambda\neq 0$. For any $\sigma \in G$, Proposition~\ref{prop1} yields $g^\sigma(1) \neq 0$. Now $g^\sigma(1) = \sum_{j=1}^N \sigma(\lambda_j) f_j^\sigma(1)$, so that $\varpi = \prod_{\sigma\in G} g^\sigma(1) \neq 0$.

Denote by $\calI   $ the set of all tuples of integers $\isoul = (i_\sigma)_{\sigma\in G}$ such that $1 \leq i_\sigma \leq N$ for any $\sigma\in G$. Expanding the product in the definition \eqref{eqdefvarpi} of $\varpi$ yields
\begin{equation} \label{eqdefvp}
\varpi = \sum_{\isoul\in\calI} 
\prod_{\sigma\in G} \sigma(\lambda_{i_\sigma}) f_{i_\sigma}^\sigma(1)
= \sum_{\isoul\in\calI} 
\Big( \prod_{\sigma\in G} \sigma(\lambda_{i_\sigma})\Big) g_\isoul(1)
\end{equation}
where
\begin{equation} \label{eqdefgi}
g_\isoul(z) := \prod_{\sigma\in G} f_{i_\sigma}^\sigma(z) 
\end{equation}
is an $E$-function with coefficients in $\K$. 

The normal basis theorem (see for instance \cite[Theorem 5.18]{Milne}) provides an element $\alpha\in\K$ such that the $\sigma(\alpha)$, for $\sigma\in G$, make up a basis of the $\Q$-vector space $\K$. Upon multiplying $\alpha$ by a suitable positive integer, we may assume that $\alpha\in\OK$ (so that $\sigma(\alpha)\in\OK$ for any $\sigma\in G$). Expanding all coefficients of $ g_\isoul(z)$ in this basis yields (using \cite[Chapter 3, Lemma~12]{Shidlovskiilivre}) $E$-functions $ g_{\isoul,\sigma}(z)$ with coefficients in $\Q$, for $\sigma\in G$, such that 
\begin{equation} \label{eqdcpgi}
 g_\isoul(z) = \sum_{\sigma\in G} \sigma(\alpha)g_{\isoul,\sigma}(z)
 \quad \mbox{ for any } \isoul\in\calI.
\end{equation}
In the sequel, it is important to observe that these $E$-functions $ g_{\isoul,\sigma}(z)$ are uniquely determined by $ g_\isoul(z)$, since for each $n$ their coefficients of $z^n$ are given by the expansion in the basis $(\sigma(\alpha))_{\sigma\in G}$ of the corresponding coefficient of $ g_\isoul(z)$.

For $ \isoul\in\calI$ and $\sigma\in G$, we denote by $\sigma(\isoul)$ the tuple $\jsoul\in\calI$ defined by $j_\tau = i_{\sigma\circ\tau}$ for any $\tau\in G$. Let us prove that 
\begin{equation} \label{eqegalg}
g_{\isoul,\sigma}(z) = g_{\sigma(\isoul),\Id}(z) \quad \mbox{ for any } \isoul\in\calI
 \mbox{ and any } \sigma\in G.
\end{equation}
Indeed we have:
\begin{align*}
\sum_{\tau\in G} \tau(\alpha) g_{\sigma(\isoul),\tau}(z) 
&=
g_{\sigma(\isoul)}(z) \mbox{ using Eq.~\eqref{eqdcpgi}}
\\
&=
\prod_{\tau\in G} f_{i_{\sigma\circ\tau}}^\tau(z)
\mbox{ by definition of $g_{\sigma(\isoul)}(z)$}
\\
&=
\prod_{\tau'\in G} f_{i_{ \tau'}}^{\sigma^{-1}\circ \tau'}(z) \quad \mbox{ by letting }
\tau' = \sigma\circ\tau\\
&=
\Big( \prod_{\tau'\in G} f_{i_{ \tau'}}^{ \tau'}\Big)^{\sigma^{-1}}(z)\\
&= g_\isoul^{\sigma^{-1}}(z) \\
&=
\sum_{\tau\in G} \sigma^{-1}(\tau(\alpha)) g_{ \isoul ,\tau}(z) 
 \quad \mbox{ since $g_{ \isoul ,\tau} $ has coefficients in $\Q$.}
\end{align*}
Comparing the coefficient of $\alpha$ on both sides yields Eq.~\eqref{eqegalg} since an expansion for $g_{\sigma(\isoul)}(z)$ like the one of Eq.~\eqref{eqdcpgi} is unique.

Let us prove now that the vector of $E$-functions $g_{ \isoul ,\Id}(z)$, for $\isoul\in\calI$, is solution of a first-order linear differential system. By assumption we have $f_i'(z) = \sum_{j=1}^N A_{i,j}(z) f_j(z)$ with $A_{i,j}\in\K(z)$, so that 
$ (f_i^\sigma)'(z) = (f_i')^\sigma(z)=\sum_{j=1}^N \sigma(A_{i,j})(z) f_j^\sigma(z)$ and
\begin{align*}
g_\isoul '(z) 
&=\sum_{\sigma\in G} ( f_{i_\sigma}^\sigma)'(z) \prod_{\tau\neq \sigma} 
 f_{i_\tau}^\tau (z)\\
 &= \sum_{\sigma\in G} \sum_{j=1}^N \sigma(A_{i_\sigma,j})(z) f_j^\sigma(z) 
 \prod_{\tau\neq \sigma} f_{i_\tau}^\tau(z) \\
&= \sum_{\isoul' \in\calI } B_{\isoul,\isoul'}(z)g_{\isoul'}(z) \quad \mbox{ for some }
B_{\isoul,\isoul'}(z)\in\K(z)\\
&= \sum_{\isoul' \in\calI } B_{\isoul,\isoul'}(z) \sum_{\sigma\in G} \sigma(\alpha) g_{\sigma(\isoul'),\Id}(z) \quad \mbox{ using Eqs.~\eqref{eqdcpgi} and \eqref{eqegalg}}\\
&= \sum_{\isoul'' \in\calI } C_{\isoul,\isoul''}(z) g_{\isoul'',\Id}(z) \quad \mbox{ for some }
C_{\isoul,\isoul''}(z)\in\K(z).
\end{align*}
Each $C_{\isoul,\isoul''}(z)\in\K(z)$ can be written as $N_{\isoul,\isoul''}(z)/D_{\isoul,\isoul''}(z)$ with $D_{\isoul,\isoul''}(z)\in\Q[z]\setminus\{0\}$ and $N_{\isoul,\isoul''}(z)\in\K[z]$. Writing $N_{\isoul,\isoul''}(z)$ as a $\Q[z]$-linear combination of the $\sigma(\alpha)$, $\sigma\in G$, yields an expression
\begin{equation} \label{eqcp1}
g_\isoul '(z) = \sum_{\sigma\in G} \sigma(\alpha)\sum_{\isoul'' \in\calI } R_{\isoul,\isoul'',\sigma}(z) g_{\isoul'',\Id}(z) 
\end{equation}
with $R_{\isoul,\isoul'',\sigma}(z) \in\Q(z) $. On the other hand, Eq.~\eqref{eqdcpgi} yields
\begin{equation} \label{eqcp2}
g_\isoul '(z) = \sum_{\sigma\in G} \sigma(\alpha) g_{\isoul ,\sigma}'(z). 
\end{equation}
Comparing the components on $\alpha$ of Eqs.~\eqref{eqcp1} and \eqref{eqcp2}, unicity of such an expression yields
$$g_{\isoul ,\Id}'(z) = \sum_{\isoul'' \in\calI }R_{\isoul,\isoul'',\Id}(z) g_{\isoul'',\Id}(z) . $$
This concludes the proof that the vector of $E$-functions $g_{ \isoul ,\Id}(z)$, $\isoul\in\calI$, satisfies a first-order linear differential system with coefficients in $\Q(z)$.

Now we come back to $\varpi$: combining Eqns.~\eqref{eqdefvp},~\eqref{eqdcpgi} and \eqref{eqegalg} yields
\begin{equation} \label{eqdcpsurQ}
\varpi = \sum_{\isoul\in\calI} \Big( \prod_{\sigma\in G} \sigma(\lambda_{i_\sigma})\Big)
\sum_{\tau\in G}\tau(\alpha) g_{\tau(\isoul), \Id}(1) = \sum_{\isoul'\in\calI} \kappa_{\isoul'} g_{\isoul', \Id}(1)
\end{equation}
upon letting 
$$\kappa_{\isoul'} = \sum_{\tau\in G}\tau(\alpha)\prod_{\sigma\in G} \sigma(\lambda_{i'_{\tau^{-1}\circ\sigma}})\in\K. $$

Let us prove that $\kappa_{\isoul}\in \Z$ for any $ \isoul\in\calI$. Indeed for any $\gamma\in G$, we have:
$$
\gamma(\kappa_\isoul) = \sum_{\tau\in G}\gamma(\tau(\alpha)) \prod_{\sigma\in G} \gamma(\sigma(\lambda_{i_{\tau^{-1}\circ\sigma}})) 
= \sum_{\tau'\in G}\tau'(\alpha)\prod_{\sigma'\in G} \sigma'(\lambda_{i_{\tau'^{-1}\circ\sigma'}}) = \kappa_{\isoul}
$$
by letting $ \tau' = \gamma\circ\tau$ and $\sigma'=\gamma\circ\sigma$, since $\tau'^{-1}\circ\sigma' = \tau^{-1}\circ\sigma$.
Using the fact that all $\tau(\alpha)$ and all $\lambda_j$ belong to $\OK$, we deduce that $\kappa_\isoul\in \OK\cap\Q=\Z$. 

To sum up, Eq.~\eqref{eqdcpsurQ} shows that $\varpi$ is a $\Z$-linear combination of the values at 1 of a family of cardinality $\Card ( \calI ) = N^d$  of $E$-functions with coefficients in $\Q$, solution of a first order differential system. Therefore Step 1 applies with $H' := H^d \sum_{\tau\in G}|\tau(\alpha)|$, since $|\kappa_\isoul|\leq H'$ for any $\isoul$. We obtain $| \varpi | > c H^{-dN^d+d-\eps}$ for any $\eps>0$, where $c>0$ depends on $\eps$. 
Now Eq.~\eqref{eqdefvarpi} yields $| \varpi | \leq c' H^{d-1} |\Lambda|$ by bounding trivially the factors corresponding to all $\sigma\neq\Id$; here $c'$ is a positive constant that depends only on $f_1$, \ldots, $f_N$ and $\K$. Combining these estimates yields $|\Lambda|> c'' H^{- dN^d +1-\eps } $ for some constant $c''$; this concludes Step 2 in the case where $\K$ is a Galois extension of $\Q$.

\medskip

If $\K/\Q$ (of degree $d$) is not assumed to be Galois, we consider a finite Galois extension $\L$ of $\Q$ such that $\K\subset \L$. We now explain the changes that must be made to the above construction. We let $G_0=\Gal(\L/\Q)$ and $H=\Gal(\L/\K)$. In the definition of $\varpi$, namely Eq.~\eqref{eqdefvarpi}, the product is now taken over the $d$ cosets $\sigma\in G_0/H$; indeed $\sigma(\lambda_j)$ and $f_j^\sigma$ are the same for all $\sigma$ in a given coset, because $\lambda_j$ and the coefficients of $f_j$ belong to $\K$.
In the products of Eqns.~\eqref{eqdefvp} and~\eqref{eqdefgi}, $\sigma$ ranges through $G_0/H$, and $\calI = \{1,\ldots,N\}^{ G_0/H }$. However the normal basis theorem is applied to the Galois extension $\L/\Q$, so that $\alpha\in\OL$ and $\sigma$ ranges through $G_0$ in Eqns.~\eqref{eqdcpgi} to~\eqref{eqcp2}. In Eq.~\eqref{eqdcpsurQ}, the product is over $\sigma\in G_0/H$ and the sum over $\tau\in G_0$. We have $\kappa_{\isoul}\in \L$, and deduce that $\kappa_{\isoul}\in \Q$ since $\gamma(\kappa_{\isoul}) = \kappa_{\isoul}$ for any $\gamma\in G_0$. We conclude the proof in the same way since $\Card(\calI)=N^d$.

\medskip

\noindent {\bf Alternative proof of Step 2 if $\lambda_j\in\Z$ for any $j$}. For the reader's convenience we give now a slightly different proof in this special case. It is based on the same idea of considering $\varpi$, but its expansion and the way Step 1 is applied are not the same. As in Step 2, we assume $\K/\Q$ to be Galois (the general case is dealt with as explained at the end of Step 2), let $G=\Gal(\K/\Q)$ and consider
\begin{equation} \label{eqdefvarpianc}
\varpi = \prod_{\sigma\in G} \Big( \sum_{j=1}^N \lambda_j f_j^\sigma(1)\Big)
\end{equation}
since we have $\sigma(\lambda_j)=\lambda_j$ now; we still have $\varpi\neq 0$. To expand the product in the definition of $\varpi$, we denote by $\calN$ the set of all tuples $\nsoul = (n_1,\ldots,n_N)$ of non-negative integers such that $ n_1 + \ldots + n_N = d$. For any $\nsoul\in\calN$, we denote by $I(\nsoul) $ the set of all tuples $\isoul = (i_\sigma)_{\sigma\in G}$ consisting of	 integers $i_\sigma\in\{1,\ldots,N\}$ such that for any $j\in\{1,\ldots,N\}$ we have:
$$\Card \{ \sigma\in G, \, \, i_\sigma=j\}=n_j.$$ 
Then Eq.~\eqref{eqdefvarpianc} yields
\begin{equation} \label{eqvarpi2}
\varpi = \sum_{\nsoul \in \calN} \lambda_1^{n_1} \cdot \ldots \cdot \lambda_N^{n_N} \varphi_{\nsoul}(1) \quad \mbox{ upon letting }\quad 
\varphi_{\nsoul}(z) := \sum_{ \isoul\in I(\nsoul)} \prod_{\sigma\in G} \ f_{i_\sigma}^\sigma(z) .
\end{equation}

Let us prove that $\varphi_{\nsoul}(z)$, which is an $E$-function with coefficients in $\K$, actually has coefficients in $\Q$ for any $\nsoul\in\calN$. For any $\tau\in G$ we have:
\begin{align*}
\varphi_{\nsoul} ^\tau 
&= \sum_{ \isoul\in I(\nsoul)} \prod_{\sigma\in G} \ \Big( f_{i_\sigma}^\sigma \Big)^\tau \\
&= \sum_{ \isoul\in I(\nsoul)} \prod_{\sigma\in G} \ f_{i_\sigma}^{\tau\circ\sigma} \\
&= \sum_{ \isoul\in I(\nsoul)} \prod_{\sigma'\in G} \ f_{i_{\tau^{-1}\circ\sigma'}}^{ \sigma'} \quad \mbox{ by letting } \sigma'=\tau\circ\sigma\\
&= \sum_{ \isoul' \in I(\nsoul)} \prod_{\sigma'\in G} \ f_{i'_{ \sigma'}}^{ \sigma'} 
\end{align*}
where the last equality comes from letting $ i'_\sigma = i_{\tau^{-1}\circ\sigma}$ for any $\sigma\in G$; indeed this defines a bijective map $I(\nsoul)\to I(\nsoul)$. Therefore $\varphi_{\nsoul} ^\tau = \varphi_{\nsoul}$ for any $\tau\in G$, and the $E$-function $\varphi_{\nsoul}(z)$ has coefficients in $\Q$. 

We denote by $\calE$ the vector space spanned over $\Qbar(z)$ by the functions 
$\prod_{\sigma\in G} \ f_{i_{ \sigma}}^{ \sigma} $ for all tuples $\isoul = (i_\sigma)_{\sigma\in G}$ consisting of integers $i_\sigma\in\{1,\ldots,N\}$. There are $N^d$ such tuples, so $\dim(\calE)\leq N^d$. Moreover we have $g'\in\calE $ for any $g\in\calE$. 

Let $\delta$ denote the dimension of the vector space spanned over $\Q(z)$ by the functions $\varphi_{\nsoul}$ for $\nsoul\in\calN$. We can choose $\delta$ functions $h_1, \ldots, h_\delta$ among the $\varphi_{\nsoul}$ which are linearly independent, and span the same $\Q(z)$-vector space. Choosing among the successive derivatives of $h_1, \ldots, h_\delta$ it is possible to find an integer $\delta'\geq \delta$ and functions $h_i$, for $\delta+1\leq i \leq \delta'$, such that $h_1, \ldots, h_{\delta'}$ are linearly independent over $\Q(z)$ and satisfy a linear differential system of order 1. Since they have rational coefficients, they are also linearly independent over $\Qbar(z)$; now they all belong to $\calE$, so we have $\delta'\leq \dim(\calE)\leq N^d$. 

Proposition~\ref{prop2} with $\mathbb K=\mathbb Q$ yields a vector of $E$-functions $e_1,\ldots,e_{\delta'}$ with rational coefficients, solution of a first-order differential system with no finite non-zero singularity, such that each $h_i$ is a linear combination of $e_1,\ldots,e_{\delta'}$ with coefficients in $\Q[z]$. There exist $R_{\nsoul,i}, S_{\nsoul,i}\in\Q(z)$ for $\nsoul\in\calN$ and $1\leq i \leq \delta'$ such that, for any $\nsoul$, 
$$ \varphi_{\nsoul}(z) = \sum_{i=1}^{\delta'} R_{\nsoul,i}(z) h_i(z) = \sum_{i=1}^{\delta'} S_{\nsoul,i}(z) e_i(z).$$
If no $ S_{\nsoul,i}$ has a pole at $z=1$, we can take $z=1$ in this equation. To deal with the general case, we expand the right hand side as a polynomial in $1/(z-1)$, up to an additive term which is holomorphic and vanishes at $z=1$. Since $ \varphi_{\nsoul}(z)$ is holomorphic at 1, all polar contributions cancel out and the value at $z=1$ is given by the constant term of the above-mentioned polynomial. This provides an expression of the form 
$$ \varphi_{\nsoul}(1) = \sum_{i=1}^{\delta'} \sum_{j=0}^{J} a_{\nsoul,i,j} e_i^{(j)}(1) $$
with $a_{\nsoul,i,j}\in \Q$. Since ${}^t(e_1,\ldots,e_{\delta'})$ is solution of a first-order differential system with coefficients in $\mathbb Q[z,1/z]$, hence with no finite non-zero singularity, we obtain finally
\begin{equation}\label{eq:2512}
 \varphi_{\nsoul}(1) = \sum_{i=1}^{\delta'} b_{\nsoul,i} e_i (1) 
\end{equation}
with $b_{\nsoul,i}\in \Q$ (where simply $b_{\nsoul,i}:=S_{\nsoul,i}(1)$ in the ``no pole at $z=1$'' case considered above).
Using Eq.~\eqref{eq:2512} into Eq.~\eqref{eqvarpi2} yields
$$
\varpi = \sum_{i=1}^{\delta'} \mu_i e_i(1)
\quad \mbox{ with }\quad
\mu_i = \sum_{\nsoul \in \calN} \lambda_1^{n_1} \cdot \ldots \cdot \lambda_N^{n_N} b_{\nsoul,i}\in\Q.
$$

This enables us to apply the special case of Theorem~\ref{th1} where $\K=\Q$, proved in Step 1, with $N$ replaced with $\delta'\leq N^d$. 
Indeed we denote by $\alpha\in\Z$ a common positive denominator of the rational numbers $ b_{\nsoul,i} $; then we have
$\alpha\mu_1,\ldots, \alpha\mu_{\delta'}\in\Z$.
Since $\varpi \neq 0$ we obtain $|\alpha \varpi | > c {H'}^{-N^d+1-\eps}$ where 
$$
H'=\max_{1\leq i \leq \delta} |\alpha \mu_i| \leq \beta 
\max_{\nsoul \in \calN} \house{ \lambda_1^{n_1} \cdot \ldots \cdot \lambda_N^{n_N}} \leq \beta H^d
$$ 
where $\beta>0$ depends only on $f_1,\ldots,f_N$ and $\K$. Then we conclude the proof as in Step 2.

\section{Decomposition of $E$-functions over a number field} \label{secdcp}

In the same spirit as Proposition~\ref{prop1}, it is possible to prove the following result. The weaker version with $\K$ replaced by $\Qbar$ was first proved in the unpublished note \cite{ri2016}, and the special case $\K=\mathbb Q$ in \cite{BRS2}.

\begin{prop} \label{prop3}
Let $f$ be an $E$-function with coefficients in a number field $\K$. Then there exist polynomials $P,Q\in\K[z]$, and an $E$-function $g$ with coefficients in $\K$, such that
$$f(z) = P(z) + Q(z)g(z) \mbox{ and $g(z_0)$ is transcendental for all $z_0\in\Qbar^\ast$.}$$
\end{prop}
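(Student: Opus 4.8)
The plan is to decompose $f$ by peeling off, one transcendental value at a time, the obstruction to $g$ taking only transcendental values at nonzero algebraic points. First I would set up the key dichotomy: either $f$ already takes only transcendental values at every $z_0\in\Qbar^\ast$ (in which case we are done with $P=0$, $Q=1$, $g=f$), or there exists some $z_0\in\Qbar^\ast$ at which $f(z_0)$ is algebraic. The main idea is to exploit this algebraic value using Proposition~\ref{prop1}: if $f(z_0)=\beta$ is algebraic, then the $E$-function $f(z)-\beta$ vanishes at $z_0$, and by the equivalence $(i)\Leftrightarrow(iv)$ of Proposition~\ref{prop1} (applied after enlarging $\K$ to contain $\beta$ and $z_0$, then descending), we can factor out the minimal polynomial $D$ of $z_0$ to write $f(z)-\beta = D(z)h(z)$ for an $E$-function $h$. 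This replaces $f$ by an $E$-function $h$ of strictly smaller "complexity" in a sense to be made precise, at the cost of recording the polynomial correction $\beta$ and the factor $D(z)$ in $P$ and $Q$.

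The central difficulty is choosing a complexity measure that \emph{strictly decreases} under this operation, so that the process terminates. The natural candidate is the order of the minimal inhomogeneous differential equation satisfied by $f$ over $\K(z)$, or equivalently the dimension of the $\K(z)$-vector space spanned by $f$ and its derivatives together with $1$. I would argue that writing $f(z)=\beta+D(z)h(z)$ with $\deg D\ge 1$ forces $h$ to satisfy a differential equation whose order does not exceed that of $f$; and crucially, that infinitely many such reductions cannot occur without the order dropping, because each genuine factorization removes a zero and the accumulated polynomial factor would otherwise grow without bound while the function $f$ has fixed finite order. The cleanest route is probably an induction on this order: one shows that if $f$ takes an algebraic value at some $z_0\in\Qbar^\ast$, then after factoring, the resulting $E$-function $h$ satisfies a differential equation of strictly smaller order (or one handles the case $f$ itself algebraic-valued-everywhere as the base case directly).

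The subtle point I expect to be the main obstacle is ruling out the possibility of an \emph{infinite} descent in which $f$ keeps taking algebraic values at new points without the order decreasing. This is exactly where one must argue that a single reduction $f(z)-\beta=D(z)h(z)$ genuinely lowers an integer-valued invariant. I would address this by invoking the theory of $E$-operators (as the paper does elsewhere, following Andr\'e and Beukers), controlling how the minimal operator annihilating $f$ relates to that annihilating $h$: dividing by $(z-z_0)$ corresponds to a desingularization/shift that cannot increase, and strictly decreases, the relevant rank. One must also verify at each step that the new function $g$ (or intermediate $h$) still has coefficients in the \emph{same} field $\K$; this is guaranteed by Proposition~\ref{prop1}$(iv)$, which produces $g$ with coefficients in $\K$ provided $D$ is the minimal polynomial over $\K$ and $\beta\in\K$ — so a preliminary reduction handling the case $\beta\notin\K$ (by conjugate-averaging as in the proof of Theorem~\ref{th1}, Step~2) may be needed to stay inside $\K$.

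Once termination is established, I would collect the peeled-off pieces: each reduction contributes a constant (or polynomial) term to $P$ and a polynomial factor to $Q$, and after finitely many steps one reaches an $E$-function $g$ with coefficients in $\K$ that takes no algebraic value at any $z_0\in\Qbar^\ast$, together with the accumulated $P,Q\in\K[z]$ satisfying $f=P+Qg$. The verification that $P$ and $Q$ indeed lie in $\K[z]$ (not merely $\Qbar[z]$) is immediate from the fact that at every stage the data stay in $\K$, which is precisely what the $\K$-rational refinement of Beukers' factorization in Proposition~\ref{prop1} secures.
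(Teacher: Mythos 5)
Your overall peeling strategy (subtract the algebraic value, factor out the minimal polynomial via Proposition~\ref{prop1}, iterate) is the same as the paper's, but your termination argument rests on a claim that is false. Writing $f-P_0=D(z)h(z)$ does \emph{not} strictly decrease the order of the minimal inhomogeneous differential equation: for instance $f(z)=(z-1)e^z$ and $h(z)=e^z$ both generate, together with $1$ and their derivatives, a $2$-dimensional space over $\K(z)$, and in general multiplication by a polynomial preserves the minimal order. Likewise, the desingularization of Proposition~\ref{prop2} preserves the number of functions rather than decreasing any rank. So the integer invariant you propose does not descend and your induction does not terminate. The paper's termination mechanism has two ingredients you are missing. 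First, the outer induction is on the number of points $\alpha\in\Qbar^\ast$ with $f(\alpha)\in\Qbar$; this number is \emph{finite} because, by Beukers' Theorem~\ref{thbeukers}, a transcendental $E$-function can take an algebraic value only at one of the finitely many singularities of a differential equation it satisfies. (One checks that the bad set of the quotient is contained in that of $f$ and no longer contains $\alpha$.) Second, at a fixed $\alpha$ the quotient $g_0$ may again satisfy $g_0(\alpha)\in\Qbar$, so one must iterate $g_0=P_1+Dg_1$, etc., at the \emph{same} point; the termination of this inner loop is a separate nontrivial fact, for which the paper refers to the proof of \cite[Theorem 3.4]{BRS2}. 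Your write-up conflates these two loops and controls neither with the differential order.

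The second gap is the $\K$-rationality of what you subtract. When $f(\alpha)=\beta$ is algebraic, $\beta$ need not lie in $\K$, so $f-\beta$ need not have coefficients in $\K$ and Proposition~\ref{prop1}$(iv)$ does not apply over $\K$; your suggested ``conjugate-averaging'' is not worked out and would modify $f$ itself, not just $\beta$. The paper resolves this with Lemma~\ref{lemateo} (itself deduced from Proposition~\ref{prop1}): one always has $f(\alpha)\in\K(\alpha)$, hence $f(\alpha)=P_0(\alpha)$ for some $P_0\in\K[z]$, and then $f-P_0$ is an $E$-function with coefficients in $\K$ vanishing at $\alpha$, to which Proposition~\ref{prop1}$(iv)$ applies directly. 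Without this step you cannot guarantee that $P,Q\in\K[z]$ and that $g$ has coefficients in $\K$.
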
 

In this setting, the non-zero algebraic numbers $z$ at which a transcendental $f$ takes an algebraic value are exactly the roots of $Q$. Moreover, replacing $P$ with its remainder in its Euclidean division by $Q$, we may assume $\deg P < \deg Q$ provided $Q\neq 0$ (i.e., when $f$ is not a polynomial) and unicity then holds if $Q$ is monic such that $Q(0)\neq 0$ (properties which can both be assumed without loss of generality); see \cite[Proposition 3.3]{BRS2}. 

\medskip

Proposition~\ref{prop3} is a generalization of the following result, which will be used in the proof. It is stated as \cite[Theorem 4]{ateo} and its proof is due to the referee of \cite{gvalues}.

\begin{lem} \label{lemateo}
Let $f$ be an $E$-function with coefficients in a number field $\K$, and $\alpha\in\Qbar$ be such that $f(\alpha)$ is algebraic. Then $f(\alpha)\in \K(\alpha)$.
\end{lem}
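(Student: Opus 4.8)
The plan is to prove Lemma~\ref{lemateo} by considering the Galois conjugates of $f$ and exploiting Proposition~\ref{prop1}. The key observation is that if $f(\alpha) = a$ is algebraic, then the $E$-function $f(z) - a$ (which has coefficients in $\K(a) = \K$, since we may assume $a\in\K$ is what we want to show) vanishes at $\alpha$. But we cannot assume $a\in\K$ a priori; that is precisely the conclusion. So instead I would work in a larger field $\L$ containing $\K$, $\alpha$ and $a = f(\alpha)$, and study how $a$ transforms under $\Gal(\Qbar/\K(\alpha))$.

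First I would assume $\alpha\neq 0$ (the case $\alpha=0$ giving $f(0)=a_0\in\K$ directly from the definition). Set $a := f(\alpha)$, and let $\sigma\in\Gal(\Qbar/\K(\alpha))$ be arbitrary. Since the coefficients of $f$ lie in $\K$, we have $f^\sigma = f$ (the coefficients are fixed by $\sigma$), and since $\sigma$ fixes $\alpha$ we have $\sigma(\alpha)=\alpha$. The crux is to show $\sigma(a) = a$ for every such $\sigma$, which by Galois theory forces $a\in\K(\alpha)$. To see this, consider the $E$-function $h(z) := f(z) - f(\alpha)$, but note $f(\alpha)$ is a constant that may not lie in $\K$, so $h$ has coefficients in $\K(a)$. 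Applying $\sigma$ and using $f^\sigma=f$, the function $h^\sigma(z) = f(z) - \sigma(a)$ vanishes at $\sigma(\alpha)=\alpha$; that is, $f(\alpha) = \sigma(a)$, whence $\sigma(a) = a$.

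The step I expect to need the most care is making rigorous the claim that $\sigma(a)=a$ for all $\sigma\in\Gal(\Qbar/\K(\alpha))$, because the naive manipulation above already seems to give the result without invoking Proposition~\ref{prop1} at all. The subtlety is that $h^\sigma(z) = f(z) - \sigma(a)$ only because $f^\sigma = f$; evaluating at $\alpha$ gives $h^\sigma(\alpha) = f(\alpha) - \sigma(a) = a - \sigma(a)$, and one must argue this equals $h(\sigma^{-1}(\alpha)$-type quantities correctly. The clean route is: $h$ vanishes at $\alpha$, so by the equivalence $(i)\Leftrightarrow(iii)$ of Proposition~\ref{prop1} applied over the field $\K(a)$, every conjugate $h^\sigma$ vanishes at $\sigma(\alpha)$; taking $\sigma$ to fix $\K(\alpha)$ and hence $\alpha$ gives $0 = h^\sigma(\alpha) = f(\alpha)-\sigma(a) = a - \sigma(a)$. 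Thus $a$ is fixed by all of $\Gal(\Qbar/\K(\alpha))$, and therefore $a\in\K(\alpha)$, which is exactly the assertion. I would close by noting the degenerate case where $f$ is constant or $Q$-type considerations are vacuous need no special treatment here.
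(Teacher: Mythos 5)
Your proof is correct and is essentially the paper's own argument: both consider the $E$-function $f(z)-f(\alpha)$, which has coefficients in a number field containing $\K$ and $f(\alpha)$, apply the implication $(i)\Rightarrow(iii)$ of Proposition~\ref{prop1}, and restrict to $\sigma$ fixing $\K(\alpha)$ to conclude that $\sigma(f(\alpha))=f(\alpha)$, hence $f(\alpha)\in\K(\alpha)$. The only cosmetic difference is that the paper works with a finite Galois extension $\L$ of $\K(\alpha)$ containing $f(\alpha)$ rather than with the full group $\Gal(\Qbar/\K(\alpha))$, which changes nothing.
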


This lemma asserts that $\EE_{\K(\alpha)}\cap\Qbar=\K(\alpha)$; it is a consequence of Theorem~\ref{theotens}.

For the convenience of the reader, let us deduce Lemma~\ref{lemateo} from Proposition~\ref{prop1}. Let $\beta = f(\alpha)$, and $\L$ be a finite Galois extension of $\K(\alpha)$ such that $\beta\in\L$. Since $f(z)-\beta$ vanishes at $\alpha$, Proposition~\ref{prop1} shows that for any $\sigma\in \Gal(\L/\K(\alpha))$ 
the $E$-function $f(z)-\sigma(\beta) = f^\sigma(z)-\sigma(\beta) $ vanishes at $\sigma(\alpha)=\alpha$, so that $ \sigma(\beta) =f(\alpha) = \beta$. 
This concludes the proof of Lemma~\ref{lemateo}.

\medskip

\begin{proof}[Proof of Proposition~\ref{prop3}] To prove Proposition~\ref{prop3}, we first remark that the result is obvious if $f$ is algebraic, hence a polynomial: we simply take $P=f$ and $Q=0$. Let us now assume that $f$ is transcendental and  argue by induction on the number of non-zero algebraic numbers  $\alpha$ such that $f(\alpha)\in \Qbar$. By Beukers' theorem  this number is finite, indeed, since $f$ is transcendental any such 
  $\alpha$ must be one of the finitely many singularities of an appropriate differential equation of which $f$ is a solution.

If this number is 0, one may choose $P=0$ and $Q=1$. Now if $f(\alpha)\in \Qbar$, Lemma~\ref{lemateo} proves that $ f(\alpha)$ belongs to $\K(\alpha) $: there exists $P_0\in\K[X]$ such that $ f(\alpha) = P_0(\alpha)$. Therefore the $E$-function $f-P_0$, with coefficients in $\K$, vanishes at $\alpha$. Proposition~\ref{prop1} yields an $E$-function $g_0$ with coefficients in $\K$ such that $f = P_0 + D g_0$ where $D $ is the minimal polynomial of $\alpha$ over $\K$. If $g_0(\alpha)\in\Qbar$, the same procedure can be carried out with $g_0$, leading to $P_1\in\K[z]$ and an $E$-function $g_1$ with coefficients in $\K$ such that $g_0=P_1+Dg_1$. After finitely many steps, this procedure terminates and provides $g_\ell$ such that $g_\ell(\alpha)\not\in\Qbar$ (see the proof of \cite[Theorem 3.4]{BRS2}). This concludes the proof of Proposition~\ref{prop3}. \end{proof}

\section{Structure of $\EK$} \label{sec5}

 In this section we prove Theorem~\ref{theotens} stated in the introduction.
Let $\K$ be a subfield of $\Qbar$. Recall that $\EK$ is the ring of all values $f(1)$ where $f$ is an $E$-function with coefficients in $\K$; in particular $\EQbar = \EE$. 

\medskip

Let $f_1$, \ldots, $f_N$ be $E$-functions with coefficients in $\K$. If $f_1(1)$, \ldots, $f_N(1)$ are linearly independent over $\Qbar$, then obviously they are linearly independent over $\K$. Conversely, let us assume that they are linearly independent over $\K$. Let $\lambda_1$, \ldots, $\lambda_N$ be algebraic numbers, not all zero, such that $ \lambda_1 f_1(1) + \cdots + \lambda_N f_N(1)=0$. Up to a permutation of the indices we may assume that $\lambda_1\neq 0$; then dividing by $\lambda_1$ we assume that $\lambda_1=1$. Let us consider a finite Galois extension $\L$ of $\K$ that contains 
$\lambda_2$, \ldots, $\lambda_N$. Then $g(z) = \sum_{i=1}^N \lambda_i f_i(z)$ is an $E$-function with coefficients in $\L$, and it vanishes at $z=1$. For any $\sigma\in\Gal(\L/\K)$, Proposition~\ref{prop1} yields $g^\sigma(1)=0$, that is $\sum_{i=1}^N \sigma(\lambda_i) f_i(1)=0$ since all $f_i$ have coefficients in $\K$. Summing these relations, as $\sigma$ varies, yields
$$\sum_{i=1}^N {\rm Tr}_{\L/\K} (\lambda_i) f_i(1)=0$$
with $ {\rm Tr}_{\L/\K} (\lambda_i) = \sum_{\sigma\in\Gal(\L/\K)} \sigma(\lambda_i) \in \K$ and $ {\rm Tr}_{\L/\K} (\lambda_1 ) = {\rm Tr}_{\L/\K} (1) = [\L:\K]\neq 0$. This is a non-trivial linear relation, with coefficients in $\K$, between $f_1(1)$, \ldots, $f_N(1)$. This contradiction concludes the proof that elements of $\EK$ are linearly independent over $\Qbar$ if, and only if, they are linearly independent over $\K$.

\section{A Galois action on values of $E$-functions} \label{secGalois}

In this section, we define an action of $\Gal(\Qbar/\Q)$ on the set $\EE$ of values of $E$-functions. 
This action is not used in the paper but it sheds a different light on the proof of Theorem~\ref{th1}: it presents similarities with Liouville's proof that irrational algebraic numbers are not too well approximated by rationals (i.e., are not Liouville numbers).

 Given $\sigma\in \Gal(\Qbar/\Q)$ and $\xi\in\EE$, there exists an $E$-function $f$ such that $\xi=f(1)$; then we let $\sigma(\xi) := f^\sigma(1)$. The crucial point is to prove that $\sigma(\xi) $ depends only on $\sigma $ and $\xi$, not on the choice of $f$. Indeed if $g$ is another $E$-function such that $\xi=g(1)$, then $f-g$ vanishes at the point 1. Proposition~\ref{prop1} shows that $f^\sigma-g^\sigma$ vanishes at 1 too, so that $ g^\sigma(1)= f^\sigma(1)$: this concludes the proof.

Theorem \ref{theotens} shows that as a Galois representation, $\EE$ is isomorphic to $\EQ\otimes \Qbar$ where $\EQ$ is a $\Q$-vector space with trivial Galois action. Therefore this does not provide a way to understand the absolute Galois group of $\Q$ better. However, it  sheds a new light on the proof of Theorem   \ref{th1} (see \S \ref{secpreuveth1}): Step 2  is very similar to Liouville's proof that irrational algebraic numbers are not too well approximated by rationals (i.e., are not Liouville numbers).
Indeed let us recall briefly Liouville's proof, stated in terms of Galois action. Let $\xi$ be an algebraic number of degree $d\geq 2$, and assume (for simplicity) that the extension $\Q(\xi)/\Q$ is Galois. To bound from below $| q\xi-p|$ for $(p,q)\in\Z^2\setminus\{(0,0)\}$, consider 
$$\varpi := \prod_{\sigma\in \Gal(\Q(\xi)/\Q)} \big( q \sigma(\xi)-p\big).$$
Then $\varpi\neq 0$ (since $\sigma(\xi)$ is irrational for any $\sigma$), and $\varpi\in\Q$ (since it is the norm of $ q\xi-p $ with respect to the extension $\Q(\xi)/\Q$). Letting $\delta\in\Z$ denote a positive integer such that $\delta \xi$ is an algebraic integer, we have $\delta^d \varpi\in\Z\setminus\{0\}$ since $\OQdexi\cap\Q=\Z$. Therefore $|\delta^d\varpi |\geq 1 $ so that 
$$\delta^{-d} \leq | \varpi | \leq | q\xi-p| (\house{\xi}+1)^{d-1} H^{d-1}$$
by bounding $| q \sigma(\xi)-p|$ trivially for $\sigma\neq \Id$, where $H = \max(|p|,|q|)$. Dividing by $q$ yields $| \xi-p/q|\geq c H^{-d}$ where $c>0$ depends only on $\xi$.

Step 2 of the proof of Theorem \ref{th1} presents similarities, except that elements of $\K\subset \Qbar$ are replaced with values of $E$-functions in $\EK$; the lower bound used by Liouville (namely, $\delta^d \varpi\in\Z\setminus\{0\}$ implies $|\delta^d \varpi|\geq 1 $) is replaced accordingly by Shidlovskii's lower bound recalled in Theorem~\ref{thshid}.

\medskip

\noindent St\'ephane Fischler, Universit\'e Paris-Saclay, CNRS, Laboratoire de math\'ema\-ti\-ques d'Orsay, 91405 Orsay, France.

\medskip

\noindent Tanguy Rivoal, Universit\'e Grenoble Alpes, CNRS, Institut Fourier, CS \!40700, 38058 Grenoble cedex 9, France.

\bigskip

\noindent Keywords: $E$-functions, Andr\'e-Beukers Theorems, Linear independence me\-asures, Irrationality measures,Transcendence measures, Liouville numbers, Sh\-id\-lov\-skii's Theorem.
\medskip

\noindent MSC 2020: 11J82 (Primary), 11J91 (Secondary)
\end{document}